\newcommand{\ie}{\textit{i.e.},\ }
\newcommand{\eg}{\textit{e.g.},\ }
\newcommand{\kins}{\kappa_\text{ins}}
\newcommand{\kcond}{\kappa_\text{cond}}
\newcommand{\dd}[2]{\frac{\partial #1}{\partial #2}}
\journal{International Journal for Numerical Methods in Engineering}
\begin{document}

\title{Improving the Robustness of the Projected Gradient Descent Method for Nonlinear Constrained Optimization Problems in Topology Optimization}

\author[mymainaddress]{Lucka Barbeau}
\author[mymainaddress]{Marc-Étienne Lamarche-Gagnon}
\author[mymainaddress]{Florin Ilinca}

\authormark{Barbeau \textsc{et al.}}
\titlemark{Improving the Robustness of the Projected Gradient Descent Method for Nonlinear Constrained Optimization Problems in Topology Optimization}

\address[mymainaddress]{\orgdiv{Simulation and Numerical Modeling}, \orgname{Automotive and Surface Transportation Research Center, National Research Council}, \orgaddress{\state{Quebec}, \country{Canada}}}

\corres{Corresponding author Lucka Barbeau, 75 Bd de Mortagne, 
            Boucherville,
            QC, 
            J4B 6Y4,
            Canada \email{lucka.barbeau@nrc-cnrc.gc.ca}}

\presentaddress{This is sample for present address text this is sample for present address text.}


\abstract[Abstract]{The Projected Gradient Descent (PGD) algorithm is a widely used and efficient first-order method for solving constrained optimization problems due to its simplicity and scalability in large design spaces. Building on recent advancements in the GD algorithm—where an inertial step component has been introduced to improve efficiency in solving constrained optimization problems—this study introduces two key enhancements to further improve the algorithm’s performance and adaptability in large-scale design spaces. First, univariate constraints (such as design variable bounds constraints) are directly incorporated into the projection step via the Schur complement and an improved active set algorithm with bulk constraints manipulation, avoiding issues with min--max clipping. Second, the update step is decomposed relative to the constraint vector space, enabling a post-projection adjustment based on the state of the constraints and an approximation of the Lagrangian, significantly improving the algorithm's robustness for problems with nonlinear constraints. Applied to a topology optimization problem for heat sink design, the proposed PGD algorithm demonstrates performance comparable to or exceeding that of the Method of Moving Asymptotes (MMA), with minimal parameter tuning. These results position the enhanced PGD as a robust tool for complex optimization problems with large variable space, such as topology optimization problems.}

\keywords{Inertial projected gradient descent,
Infeasible active set,
Bulk constraints manipulation,
Schur complement decomposition,
Step decomposition,
Nonlinear constraints,
Topology optimization}

\jnlcitation{\cname{%
\author{Barbeau L.}, 
\author{Lamarche-Gagnon M.-É.}, and
\author{Ilinca F.}}.
\ctitle{Improving the Robustness of the Projected Gradient Descent Method for Nonlinear Constrained Optimization Problems in Topology Optimization.} 
\cjournal{\it International Journal for Numerical Methods in Engineering.} \cvol{2024;00(00):1--18}.}

\maketitle

\renewcommand\thefootnote{}

\renewcommand\thefootnote{\fnsymbol{footnote}}
\setcounter{footnote}{1}

\bmsection{Introduction}
\label{sec::intro}
    

Topology optimization is a mathematical tool that determines the optimal structure or material distribution for a given task. It has seen a significant increase in interest due to advancements in manufacturing technologies, such as 3D printing, which can produce the complex geometries that often result from this process \cite{feng2018review}. Applications of topology optimization algorithms span a diverse range of fields, including structural engineering \cite{eschenauer2001topology}, heat transfer \cite{dbouk2017review}, and fluid dynamics \cite{alexandersen2020review}, among others. At the core of the topology optimization algorithm is the optimizer that minimizes a given cost function by iteratively changing the topology while preserving certain constraints related to the geometry (volume, perimeter projection, etc.) or some physical properties of the topology (compliance, pressure drop, etc.). A large variety of algorithms exist to solve such constrained optimization problems. The most known and widely spread in the field of topology optimization are the Method of Moving Asymptotes (MMA) \cite{svanberg1987method}, the Sequential Linear Programming (SLP) \cite{gomes2011slp}, the Sequential Quadratic Programming (SQP) \cite{rojas2016efficient}, the optimality criteria \cite{hassani1998review}, the Interior Point Method (IPM) \cite{hoppe2002primal}, and the Projected Gradient Descent (PGD) \cite{tavakoli2014multimaterial, nishioka2023inertial}.
Among these, projected gradient descent has seen a recent increase in interest, along with other first-order methods, due to its simplicity, reduced cost, and better scaling compared to second-order methods \cite{ nishioka2023inertial}. The PGD method benefits from having a generally small set of tunable parameters, making it easy to employ. For example, the work of Chzhen \textit{et al.} modified and improved this algorithm to make it parameter-free \cite{chzhen2023parameter}. Furthermore, work was done to improve the efficiency of this algorithm for topology optimization problems by using an acceleration method based on the inertial principal \cite{nishioka2023inertial,he2022boosting}. However, considering multiple constraints directly in the projection step leads to a general quadratic programming (QP) problem requiring appropriate methods to solve (\ie augmented Lagrange multiplier, active set method, interior point method). 
However, the projection step in the PGD algorithm involves solving a general convex QP problem, often a computationally intensive task. To manage this complexity, an active set approach is employed, which reduces the size of the projection matrix by focusing only on the active constraints \cite{gu2024random}. The active set methodology was effectively applied in a PGD context by Dostal \textit{et al.} \cite{dostal2005minimizing}, and since then, substantial advances have refined how active sets are incorporated within projections. One of the key developments is the infeasible strategy, where the active set of constraints is dynamically adjusted by adding constraints that are violated and removing those with negative Lagrange multipliers. This technique allows the algorithm to proceed from an initially infeasible point, gradually refining the feasible region until it reaches optimality \cite{kunisch2003infeasible,hungerlander2015feasible,forsgren2015active}. To accelerate the convergence, some algorithms introduce methods to deal with multiple constraints simultaneously. Schittkowski \cite{schittkowski2009active} proposes an approach that combines bulk constraint manipulation with a step length adjustment, removing redundant constraints based on activity duration to avoid cycling. Cristofari \textit{et al.} \cite{cristofari2017two}  proposed a two-stage algorithm for univariate constraints (bounds constraints) that enables backtracking and readjustment of the active set if updates fail to improve the objective, followed by a line search to ensure progress toward the solution. Additionally, to accelerate the definition of the active set, stochastic algorithms have also seen some development in this approach, manipulating multiple constraints simultaneously while ensuring, due to the stochastic nature of the algorithm, the convergence of the method \cite{gu2024random}. 

However, the robustness of the PGD method, especially in the presence of nonlinear constraints, can still be improved, which would improve the applicability of this algorithm to complex topology optimization problems. The use of nonlinear constraints in topology optimization problems is becoming increasingly frequent. As an example, the addition of overhang constraints \cite{gaynor2014topology,qian2017undercut,lamarche2024additively}, which aims at enhancing the manufacturability of the design of the part through topology optimization, introduces nonlinear constraints that can affect the convergence of the optimizer \cite{gaynor2016topology}. Some algorithm modifications are required to handle these constraints in the PGD method. The active set approach generally also requires either a stochastic algorithm to manipulate multiple constraints simultaneously or a sequence sequential manipulation of constraints, which can drastically increase the cost.
Additionally, there is room for improvement in handling univariate constraints to enhance the efficiency of the PGD algorithm. Univariate constraints (usually bounds constraints), which apply to only one of the design variables, are ubiquitous in topology optimization problems due to the requirement of maintaining the phase indicator within the bound of the material model used (usually between 0 and 1 for density-based approaches). As the discretization of the physics of the topology optimization problem at hand leads to a large number of these constraints, they are often enforced using a clipping method \cite{beck2017first}, which is not as efficient as considering them directly within the projection step \cite{zhang2020improved}. 

This work presents two improvements to the classical formulation of the PGD algorithm. First, an active set approach that enables bulk manipulation of constraints for a point projection problem is introduced. The algorithm uses an infeasible approach with bulk constraint manipulation combined with a backtracking approach when convergence cannot be guaranteed for one iteration. The projection step on the active set then uses a Schur complement that manages univariate and global constraints separately, allowing the algorithm to solve the interaction between global and univariate constraints effectively. While using the Schur complement to simplify large constraint systems is well-established \cite{bartlett2006qpschur,wong2011active}, particularly in QP where block matrices arise, its application to simplifying univariate constraints remains, to our knowledge, a novel contribution, in the context of the PGD algorithm. Second, it uses a post-projection splitting of the step to mitigate the breakage of nonlinear constraints. This is done by scaling the component of the update vector (projected gradient) orthogonal to all constraint gradients, \ie in the null constraint variation space. Doing so makes it possible to reduce nonlinear constraint violations and retrieve feasible solutions faster after a constraint breakage is observed. The scaling of this component considers two elements: the variation of an approximation of the Lagrangian gradient and the constraint breakage history.
The convergence of the modified algorithm under specific hypotheses is then demonstrated. Finally, the algorithm is implemented into a topology optimization solver, DFEM  \cite{audet2008dfemwork,navah2021development},  and is then applied to a canonical heat transfer optimization problem in 3D with two different sets of constraints. Using these cases, the approach's performance is compared with the more traditional implementation of the PGD algorithm and other classical optimization methods, such as MMA. The influence of the different tuning parameters of the algorithm on the optimization process is then discussed. Finally, this work discusses the impact of the proposed improvements and future developments.
\bmsection{Projected Gradient Descent Algorithm}
\label{sec::method}
The inertial PGD algorithm solves the following optimization problem:
\begin{align}
   \min_{\bm{\phi}} :  \; & \mathcal{C}(\bm{\phi}), \label{eq::min_cost_function}\\
   \text{s.t.} : \; & f_j(\bm{\phi}) =  a_j \text{ for } j=0,1,2, \ldots, l-1, \label{eq::eq_constraints_function}\\
   : \; & f_j(\bm{\phi}) \leq a_j \text{ for } j=l,l+1, \ldots, m-1, \label{eq::ineq_constraints_function}
\end{align}
where
\begin{align}
    \bm{\phi} &= [\phi_0, \phi_1, \ldots, \phi_{k-1}].
\end{align}

The set of points $\bm{\phi}$ that satisfies the constraints is called the feasible set and is denoted as $\mathcal{Q}$. The inertial PGD algorithm produces an iterative sequence of steps to minimize the cost function $\mathcal{C}(\bm{\phi})$ in the following form:
\begin{align}
    \bm{\Tilde{\phi}}^{n+1}&=\bm{\phi}^n-\Delta\bm{\Tilde{\phi}}^n,\\
    \Delta\bm{\Tilde{\phi}}^n&=\alpha^n  \dd {\mathcal{C}(\bm{\phi}^n)}{\bm{\phi}^n}-\beta^n(\bm{\phi}^n-\bm{\phi}^{n-1}), \label{eq::delta_phi_tilde}\\
    \bm{\phi}^{n+1}&=\text{proj}_{\mathcal{Q}}(\bm{\Tilde{\phi}}^{n+1})=\bm{\phi}^n- \Delta\bm{\phi}^n, \label{eq::phi_nplusone}
\end{align}
where the update vector ($\bm{\phi}^{n}-\text{proj}_{\mathcal{Q}}(\bm{\Tilde{\phi}}^{n+1})$) is denoted as $\Delta \bm{\phi}^n$. Here $\alpha^n$ is the step size, and $\beta^n$ is the inertial step size at the $n$ iteration. The projection onto the feasible set is done by first solving the following sub-optimization to find $\Delta \bm{\phi}^n$ directly, which is then used to find $\bm{\phi}^{n+1}$:
\begin{align}
    \min_{\Delta \bm{\phi}^n} :  \; & \frac{1}{2}\left\Vert\Delta\bm{\Tilde{\phi}}^n-\Delta \bm{\phi}^n \right\Vert^2, \label{eq::min_update} \\
    \text{s.t.}  :  \; &  f_j(\bm{\phi}^n-\Delta \bm{\phi}^n) = a_j \text{ for } j=0,1,2, ..., l, \label{eq::eq_constraints_on_update}\\
     :  \; &  f_j(\bm{\phi}^n-\Delta \bm{\phi}^n) \leq a_j \text{ for } j=l+1,l+2, ..,m.  \label{eq::ineq_constraints_on_update}
\end{align}
Note that in that case, the gradient of this minimization problem simply becomes $\Delta \bm{\phi}^n- \Delta\bm{\Tilde{\phi}}^n$.
In the presence of nonlinear constraints, meaning that if any of the $f_j(\bm{\phi})$ from Equations \eqref{eq::eq_constraints_function} and \eqref{eq::ineq_constraints_function} is a nonlinear function, it is linearized around the current iterate $\bm{\phi}^n$. This is an established method for solving nonlinear optimization problems \cite{torrisi2018projected}. 

The following sections aim to improve the PGD algorithm's applicability to topology optimization problems by enhancing two aspects of the algorithm: first, the treatment of univariate constraints within an active set framework that manipulates constraints in bulk, and second, the incorporation of a post-projection step adjustment to improve nonlinear constraint respect.

\subsection{Application of the active set method}
\label{sec::active_set}
The QP problem that arises from Equations \eqref{eq::min_update}, \eqref{eq::eq_constraints_on_update}, and \eqref{eq::ineq_constraints_on_update}, is solved using an infeasible active set approach. Solving these equations gives the optimal projection of $\bm{\tilde{\phi}}^{n+1}$ onto the feasible set $\mathcal{Q}$ using a linearization of the constraint equations $f_j(\bm{\phi})$ around the current iterate $\bm{\phi}^n$. This projection problem, when solved using an active set, can be separated into two sections: the projection and the active set identification. The projection step on a given active set is presented first, followed by the definition of the active set identification algorithm.

\subsubsection{Projection operation}
The projection on a pre-identified set of constraints in the linearized projection problem involves projecting the point $\bm{\tilde{\phi}}^{n+1}$ onto the intersection of the hyperplanes formed by the active set of linearized constraints to find $\Delta \bm{\phi}^n$. This implies that this iteration treats inequality constraints within the active set as equality constraints. The set of indices of the active constraints is denoted as $J$, and the subspace bound by the constraint part of $J$ is noted $\mathcal{J}$.

The resulting projection of $\bm{\tilde{\phi}}^{n+1}$ onto $\mathcal{Q}$ using the Lagrange multiplier approach is given by:
\begin{align}
    \bm{\phi}^{n+1} &= \bm{\phi}^n - \left(  \Delta\bm{\Tilde{\phi}}^n+ \sum_{j \in J} y_j \dd {f_j(\bm{\phi}^n)}{\bm{\phi}^n}\right),
    \label{eq::updated_point}
\end{align}
where $y_j$ are the Lagrange multipliers associated with the current active set. The Lagrange multipliers are obtained by solving the following matrix system resulting from the linearization of Equations \eqref{eq::eq_constraints_on_update} and \eqref{eq::ineq_constraints_on_update}:
\begin{align}
    A_{{i^*}{j^*}} &= \dd {f_{i^*}(\bm{\phi}^n)}{\bm{\phi}^n} \cdot \dd {f_{j^*}(\bm{\phi}^n)}{\bm{\phi}^n}, \\
    b_{i^*} &= -(a_{i^*} - f_{i^*}(\bm{\phi}^n)) -\dd {f_{i^*}(\bm{\phi}^n)}{\bm{\phi}^n} \cdot   \Delta\bm{\Tilde{\phi}}^n, \\
    \bm{y} &= A^{-1} \bm{b}.
\end{align}

Here, the indices ${i^*}$ and ${j^*}$ refer to the constraints' indices within the active set, not their global indices (\ie the indices of the constraint within Equation \eqref{eq::eq_constraints_function} and \eqref{eq::ineq_constraints_function}). This means that for each component of the vector $\bm{y}$, we must record to which global constraint index it corresponds to. 

In topology optimization problems, it is common to have many univariate constraints in the form of $d \leq \phi_i \leq e$. These constraints are typically maintained using a min--max function clipping \cite{bertsekas2014constrained}. The main advantage of this approach is that the linear problem resulting from the projection of a point onto the intersection of the constraints' hyperplanes remains relatively small since, for most applications, $m \ll k$ in the absence of univariate constraints. However, the clipping approach often results in a loss of optimality in the projection because the Lagrange multipliers associated with other constraints are either adjusted after the projection step to preserve the univariate constraints without considering the optimality of the projection or the Lagrange multipliers are not corrected, leading to none respect of constraints.

To address this issue, the projection step is resolved using a Schur complement approach, which directly incorporates the univariate constraints into the projection step. This method allows for an accurate evaluation of the contribution of univariate constraints to the update vector $\Delta \bm{\phi}$. The active constraints are divided into two blocks: global constraints (\ie those that are not univariate) and univariate constraints. The number of active global constraints is denoted by $m^*$ while the total size of the active constraint set is denoted $\hat{m}$, with $\hat{m} \leq m$. Using this separation, the matrix used to calculate the Lagrange multipliers takes the following form; note that hereafter, for simplicity $f_j(\bm{\phi}^n) \equiv f_j $:

\begin{align}
    M&=
    \begin{bmatrix}
        A & B^t \\
        B & D 
    \end{bmatrix}\label{eq::schur_complement_1} \\
    A_{{i^*}{j^*}} &= \dd {f_{i^*}}{\bm{\phi}^n} \cdot \dd {f_{j^*}}{\bm{\phi}^n} \quad \text{for } {i^*} < m^* \text{ and } {j^*} < m^*\label{eq::schur_complement_2} \\
    B_{({i^*}-m^*){j^*}} &= \frac{d f_{i^*}}{d \phi^n_{\hat{k}}} \dd {f_{j^*}}{\phi^n_{\hat{k}}}  \quad \text{for } {i^*} \geq m^* \text{ and } {j^*} < m^* \label{eq::schur_complement_3}\\
    D_{({i^*}-m^*)({j^*}-m^*)} &= 
    \begin{cases}
        \left(\frac{d f_{i^*}}{d \phi^n_{\hat{k}}}\right)^2, & \text{if } {i^*} = {j^*} \\
        0, & \text{otherwise}
    \end{cases} \quad \text{for } {i^*} \geq m^* \text{ and } {j^*} \geq m^* \label{eq::schur_complement_4}\\
    \bm{b}&=\begin{bmatrix}
        \bm{b_1} \\
        \bm{b_2}
    \end{bmatrix}= 
        \begin{cases}
       -(a_{i^*} - f_{i^*}) - \dd {f_{i^*}}{\bm{\phi}^n} \cdot   \Delta\bm{\Tilde{\phi}}^n, & \text{if } {i^*} < m^* \\
        -(a_{i^*} - f_{i^*}) - \frac{d f_{i^*}}{d \phi^n_{\hat{k}}}  \Delta\Tilde{\phi}^n_{\hat{k}}, & \text{otherwise}
    \end{cases} \label{eq::schur_complement_5} \\
    \bm{y}&=\begin{bmatrix}
        \bm{y_1} \\
        \bm{y_2}
    \end{bmatrix}= {M}^{-1} \bm{b}
    \label{eq::schur_complement_6}
\end{align}

In Equations \eqref{eq::schur_complement_1} to \eqref{eq::schur_complement_6}, the indices $\hat{k}$ refer to the components of $\bm{\phi}^n$ to which the univariate constraints correspond. Here, the block $D$ is a simple diagonal matrix, and in the case of our univariate constraint in the form  $d \leq \phi_i \leq e$, it takes the form of an identity matrix of size $\hat{m}-m^*$. This significantly simplifies the resolution, making the assembly and resolution of this system a much more manageable task using a Schur complement:
\begin{align}
    \bm{y_1} &= (A-BD^{-1} B^t)^{-1} (\bm{b_1}-B^tD^{-1}\bm{b_2}),\\
    \bm{y_2} &= D^{-1}(\bm{b_2}-B\bm{y_1}).
\end{align}
Indeed, since $D$ is diagonal, the resolution only implies the resolution of an $m^*$ by $m^*$ system of linear equations. Note that assembling this matrix system does not require heavy communication in a parallel distributed software architecture since most communication involves distributing the results of vector dot products, which are highly optimized operations. The other necessary operations are all local to each processor. Without the Schur complement, the resolution of the linear system arising in the projection problem would be impractical for topology optimization problems. Additionally, since this projection is the solution to a linear equation, the point obtained is the optimal point projection on this intersection of plans.

\subsubsection{Active set definition}
An iterative approach is used to identify the active set of constraints, where constraints are added and removed if the Lagrange multipliers are negative. Each sub-iteration of the active set definition is indexed by $o$. If some constraints from the active set are removed, the Lagrange multipliers are reevaluated before defining the update. However, the large number of univariate constraints requires bulk manipulation of constraints to be numerically efficient. As such, the algorithm steps are modified to account  for the bulk manipulation of constraints, resulting in the following steps:
\begin{enumerate}
    \item Add all the equality constraints into the initial active constraints set $\mathcal{J}^0$.
    \item Initialize the update vector $\Delta \bm{\phi}^{n,o}$ with $  \Delta\bm{\Tilde{\phi}}^n$ (Equation \eqref{eq::delta_phi_tilde}). 
    \item Add all the inequality constraints (including univariate constraints) that would be broken along the current update vector $\Delta \bm{\phi}^{n,o}$ to the previous active set $\mathcal{J}^{o-1}$ to initialize the current active set $\tilde{\mathcal{J}^{o}}$. This is done by evaluating a linearization of Equation  \eqref{eq::ineq_constraints_on_update}:
    \begin{align}
     f_j-\Delta \bm{\phi}^n \cdot \dd{f_j}{\bm{\phi}^n} \leq a_j \text{ for } j=l+1,l+2, ..,m. 
    \end{align}
    \item Evaluate the size of the active set of constraints. If its size is superior to the size of $\bm{\phi}^n$ (if $\hat{m} > k$), relax the initial guess $\Delta\bm{\Tilde{\phi}}^n$ by a factor $\zeta \in (0,1)$. In practice, this is done by multiplying both $\alpha^n$ and $\beta^n$ by $\zeta$ and reevaluating $\Delta\bm{\Tilde{\phi}}^n$. The value $\zeta =0.5$ was found to be adequate. Return to step 1 with the new initial guess and clear the active set. 
    \item Compute the projection of $\Delta\bm{\Tilde{\phi}}^n$ on the current active set $\tilde{\mathcal{J}}^o$ by solving Equations \eqref{eq::min_update},\eqref{eq::eq_constraints_on_update}, and \eqref{eq::ineq_constraints_on_update}, using Equations \eqref{eq::schur_complement_1} to \eqref{eq::schur_complement_6}. 
      and update $\Delta \bm{\phi}^{n,o}$, using 
    \begin{align}
        \Delta \bm{\phi}^{n,o} = \Delta\bm{\Tilde{\phi}}^n+ \sum_{j \in J^o} y_j \dd{f_j(\bm{\phi}^n)}{\bm{\phi}^n},
        \label{eq::update_loop_o}
    \end{align}
    and the newly evaluated value for the Lagrange multipliers $\bm{y}$.
    \item Check if the cost function increased, satisfying:
    \begin{align}
       \Vert \Delta\tilde{\bm{\phi}}^n - \Delta\bm{\phi}^{n,o} \Vert \leq \Vert \Delta\tilde{\bm{\phi}}^n - \Delta\bm{\phi}^{n,o+1} \Vert. 
       \label{eq::cost_increase_1}
    \end{align}
    If it decreases,the algorithm cannot guarantee convergence; as such, follow these sub-steps, otherwise go to the next step:
    \begin{enumerate}
        \item Reincorporate constraints from the stored copy of  $\mathcal{J}^{o-1}$ that were removed into $\tilde{\mathcal{J}^{o}}$. 
        \item Repeat the same procedure as in step 5 and project $ \Delta\tilde{\bm{\phi}}^n $ on the current active set $\tilde{\mathcal{J}^{o}}$ and obtain the new estimate for $\Delta \bm{\phi}^{n,o} $.
        \item Check if this new iterate does respect Inequality \eqref{eq::cost_increase_1}.
        \item \begin{enumerate}
        \item If the inequality is satisfied, remove the constraint with the minimal Lagrange multiplier value from $\tilde{\mathcal{J}^{o}}$ (assuming it is negative). Simultaneously, remove it from the stored copy of $\mathcal{J}^{o-1}$. Then go back to step 5 with the updated active set $\tilde{\mathcal{J}_{o}}$.
        \item If the inequality is not satisfied, check which constraints that were removed from the stored copy of $\mathcal{J}^{o-1}$ (relative to the original previous set) are now broken. Reintroduce the most binding constraints to the active set $\tilde{\mathcal{J}_{o}}$ and the stored copy of $\mathcal{J}^{o-1}$. The most binding constraint is the one with the maximum value of:
        \begin{align}
             \frac{f_j-\Delta \bm{\phi}^{n,o} \cdot \dd{f_j}{\bm{\phi}^n}-a_j}{\Vert \dd{f_j}{\bm{\phi}^n} \Vert}.
        \end{align}
        \end{enumerate}
        
    \end{enumerate}
    \item  Remove all inequality constraints within the active set having a negative Lagrange multiplier. This step is split into two conditions:
        \begin{enumerate}
            \item If newly added constraints $(\tilde{\mathcal{J}}^o \setminus \mathcal{J}^{o-1})$ have negative Lagrange multipliers, remove all of them from the active set and go back to step 5 with the updated active set.
            \item Otherwise, if any remaining constraints have a negative Lagrange multiplier, remove them and return to step 5 with the updated active set.
        \end{enumerate}
    If constraints are removed from the copy of $\mathcal{J}^{o-1}$, they are stored in case it is required to reincorporate them in $\tilde{\mathcal{J}}^o$. If no constraint was removed from the active set, indicating that the KKT criteria on the active set were respected, the active set of the current iterate $o$ is found and is noted $\mathcal{J}^o$.
    \item Finish iteration $o$ by evaluating $\Delta \bm{\phi}^{n,o+1}$ with the current active set and  Equation \eqref{eq::update_loop_o}, and define the final active set ($\mathcal{J}^o$) of this iteration ($\mathcal{J}^o=\tilde{\mathcal{J}}^o$).
    \item If, during steps 3 to 7, a constraint was either added or removed from the active set, go back to step 3 with the newly obtained $\Delta \bm{\phi}^{n,o+1}$; otherwise, exit the algorithm and use $\Delta \bm{\phi}^n=\Delta \bm{\phi}^{n,o+1}$ as the solution of the projection.
\end{enumerate}
\begin{sidewaysfigure}[!htpb]
	\centering
	\includegraphics[width=0.9\textwidth]{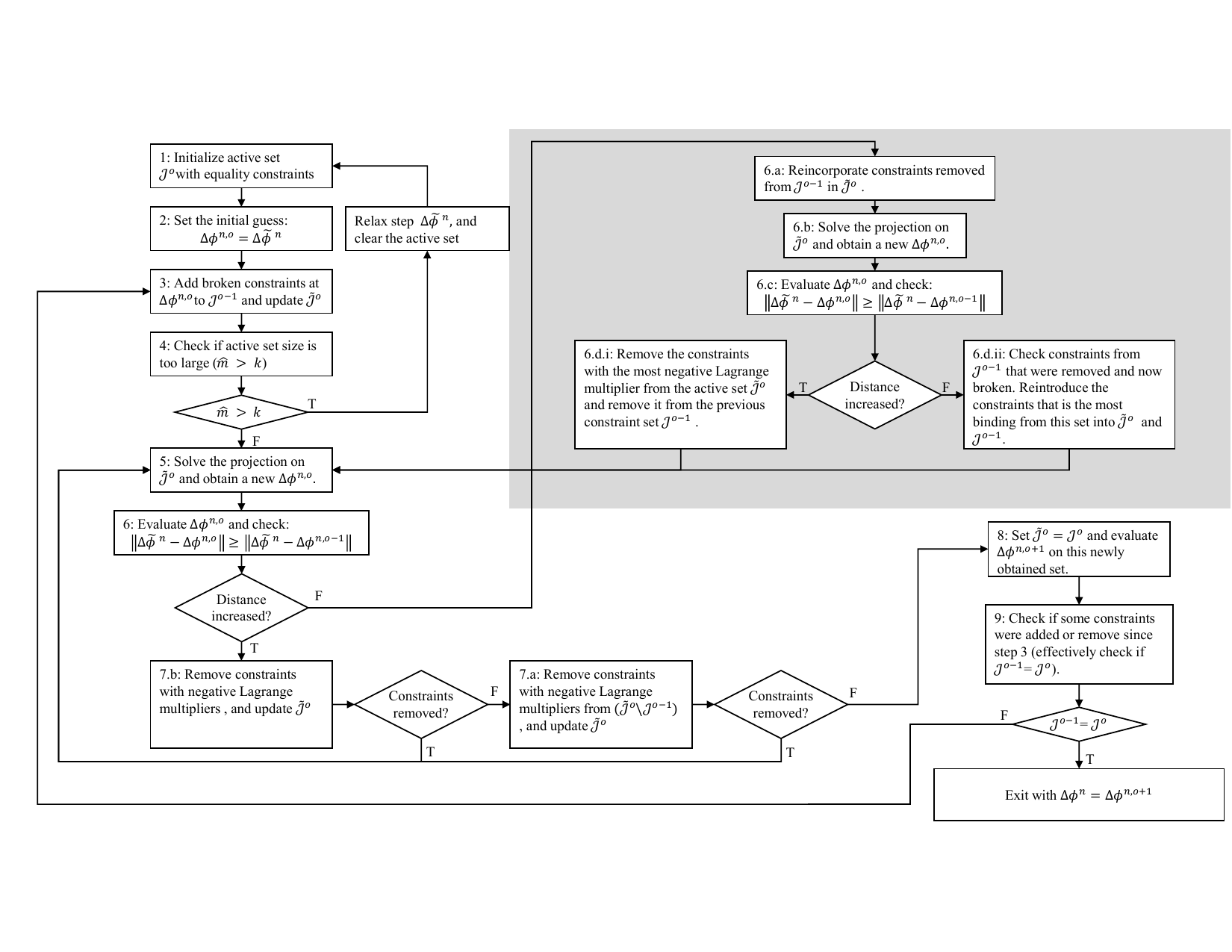}
	\caption{Active set flowchart diagram. If the path is marked by a T, the condition or question is true, and this path must be followed; otherwise, follow the path mark with F. The block of operations in the light grey zone is rarely taken when the algorithm cannot guarantee progress (See appendix \ref{appendix::frequency_6_6c_fails} for more details on the frequency).}
\label{fig::active_set_flowchart}
\end{sidewaysfigure}

To facilitate the understanding, Figure \ref{fig::active_set_flowchart} presents the workflow of the active set algorithm. The latter provides a projection of the point $\bm{\tilde{\phi}}^{n+1}$ onto the feasible set $\mathcal{Q}$ that minimizes Equation \eqref{eq::min_update}. The algorithm further manipulates constraints in bulk as much as possible to accelerate the definition of the final active set.
However, adding and subsequently removing multiple constraints from the active set can lead to oscillation, where constraints are repeatedly included and excluded from the active set. To eliminate oscillation and ensure convergence, the Lagrange multipliers are reevaluated using only the constraints with positive Lagrange multiplier values \cite{wong2011active} (loop between steps 5 and 7). Finally, suppose the algorithm cannot guarantee convergence of a given step, using the condition verified in step 6 and step 6c. In that case, it reverts to single constraint manipulation in the removal and reintroduction process to ensure convergence (step 6d). The remainder of the section demonstrates the convergence of this modified algorithm.

Let's introduce the KKT criteria for the point projection problem from Equations \eqref{eq::min_update}, \eqref{eq::eq_constraints_on_update}, and \eqref{eq::ineq_constraints_on_update}. Let $\mathcal{L^*}(\bm{\phi}^n)$ denote the Lagrangian of the projection problem, then optimality conditions are:

\begin{align}
    \dd {\mathcal{L^*}(\bm{\phi}^n)}{\bm{\phi}^n} &= (\Delta \bm{\phi}^n- \Delta\bm{\Tilde{\phi}}^n)  + \sum_{j\in J} y_j \dd {f_j(\bm{\phi}^n)}{\bm{\phi}^n} = \bm{0}, \label{eq::lagrangien_gradient_proj} \\
     f_j-\Delta \bm{\phi}^n \cdot \dd{f_j}{\bm{\phi}^n}  &= a_j \text{ for } j=0,1,2, \ldots, l, \label{eq::eq_constraint_kkt}\\
     f_j-\Delta \bm{\phi}^n \cdot \dd{f_j}{\bm{\phi}^n}  &\leq a_j \text{ for } j=l+1, l+2, \ldots, m, \label{eq::ineq_constraint_kkt} \\
    y_j &\geq 0 \text{ for } j=l+1, l+2, \ldots, m.
    \label{eq::dual_feasibility}
    \\
    (f_j - a_j) y_j &= 0 \text{ for } j=l+1, l+2, \ldots, m.
    \label{eq::complementary_slackness}
\end{align}

To show the convergence, let's introduce the concept of a self-compatible set of constraints. A self-compatible set of constraints is a collection of constraints that, when enforced together, ensures the satisfaction of all KKT conditions: feasibility, non-negative Lagrange multipliers, complementary slackness, and stationarity within this subset. Using this concept, the proof of convergence will demonstrate that the active set will be more constraining at each iteration $o+1$ than at the previous iteration ($\Vert \Delta\tilde{\bm{\phi}}^{n} - \Delta\bm{\phi}^{n,o} \Vert \leq \Vert \Delta\tilde{\bm{\phi}}^{n} - \Delta\bm{\phi}^{n,o+1} \Vert$), increasing the cost function while ensuring that the active set is self-compatible. This will ensure that all constraints are satisfied and that all of the KKT criteria are met at convergence, assuming that all constraints within the active set are linearly independent and that $\mathcal{Q}$ is feasible. 

Note that the algorithm guarantees that the active set obtained at the end of each iteration is self-compatible due to the exit condition in step 7, where the iteration only proceeds if no constraint is removed from the active set. This condition implies that all constraints in the currently active set have non-negative Lagrange multipliers, are satisfied, and the projection of $\Delta\tilde{\bm{\phi}}^{n}$ is stationary, establishing self-compatibility. Such sets possess essential properties that facilitate convergence analysis within our algorithm, which are formalized in the following lemmas.

First, Lemma \ref{lemma::constraint_violation} provides a criterion to detect when constraints within a self-compatible set are violated based on changes in the cost function. Second, Lemma \ref{lemma::cost_increase} establishes that the cost function projection on a self-compatible set must be less than the cost function at the optimal solution, should this projection violate additional constraints. These lemmas are satisfied, provided that the cost function of the sub-optimization problem is convex.

\begin{lemma}
If the cost function value $C^{o}$ for the optimal solution of a constrained optimization problem using a set of constraints $\tilde{\mathcal{J}}^o$ results in a decrease in the cost function relative to $C^{o-1}$, the cost function for the optimal solution on a self-compatible set $\mathcal{J}^{o-1}$ ($C^{o} < C^{o-1}$), then a least one of the constraints in $\mathcal{J}^{o-1}$ is violated by the optimal solution obtained with the set of constraints $\tilde{\mathcal{J}}^o$. 
\label{lemma::constraint_violation}
\end{lemma}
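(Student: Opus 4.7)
The plan is to argue by contradiction using the convexity of the sub-optimization problem. Since the projection minimizes $\tfrac12\|\Delta\tilde{\bm\phi}^n-\Delta\bm\phi^n\|^2$, which is strictly convex in $\Delta\bm\phi^n$, and the active set defines an affine (hence convex) feasible region, the KKT conditions are both necessary and sufficient for global optimality within the relaxed problem that treats the active inequality constraints as ordinary inequality constraints.

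First, I would make precise what self-compatibility of $\mathcal{J}^{o-1}$ buys us. By hypothesis the optimal iterate $\Delta\bm\phi^{n,o-1}$ satisfies primal feasibility (Equations \eqref{eq::eq_constraint_kkt}--\eqref{eq::ineq_constraint_kkt}), dual feasibility $y_j\geq 0$ (Equation \eqref{eq::dual_feasibility}), complementary slackness (Equation \eqref{eq::complementary_slackness}), and the stationarity condition \eqref{eq::lagrangien_gradient_proj} on $\mathcal{J}^{o-1}$. Since the cost is strictly convex and the constraints in $\mathcal{J}^{o-1}$ are linear, this means $\Delta\bm\phi^{n,o-1}$ is the unique global minimizer of the convex program in which the equality-indexed constraints of $\mathcal{J}^{o-1}$ are enforced as equalities and the inequality-indexed constraints of $\mathcal{J}^{o-1}$ are enforced as inequalities $f_j-\Delta\bm\phi^n\cdot\partial f_j/\partial\bm\phi^n\leq a_j$. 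Call this relaxed program $(\mathrm{P}_{o-1})$; its minimum cost is exactly $C^{o-1}$.

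Next, I would argue the contrapositive. Assume that the optimal solution $\Delta\bm\phi^{n,o}$ obtained on $\tilde{\mathcal{J}}^o$ satisfies \emph{all} constraints in $\mathcal{J}^{o-1}$ (as inequalities for the inequality-indexed ones, and as equalities for the equality-indexed ones, which are always retained by step 1 of the algorithm). Then $\Delta\bm\phi^{n,o}$ is a feasible point of $(\mathrm{P}_{o-1})$. Because $\Delta\bm\phi^{n,o-1}$ is the global minimizer of $(\mathrm{P}_{o-1})$, we must have $C^{o-1}\leq C^o$, contradicting the hypothesis $C^o<C^{o-1}$. Consequently at least one inequality-indexed constraint in $\mathcal{J}^{o-1}$ must be strictly violated by $\Delta\bm\phi^{n,o}$, which is the conclusion sought.

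The main obstacle is making rigorous the equivalence between self-compatibility on a discrete active set and global optimality for the convex relaxation $(\mathrm{P}_{o-1})$: one must be careful that the KKT multipliers associated with the inequality-indexed constraints in $\mathcal{J}^{o-1}$ are non-negative and that those not in $\mathcal{J}^{o-1}$ are implicitly zero, so the stationarity condition extends to the full relaxation. Once this lifting from the active set to the full inequality program is established, convexity closes the argument immediately.
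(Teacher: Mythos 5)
Your proposal is correct and takes essentially the same route as the paper's own proof: since $C^{o-1}$ is the minimal cost over the region defined by the self-compatible set $\mathcal{J}^{o-1}$, a strictly smaller value $C^{o}$ forces the optimum on $\tilde{\mathcal{J}}^o$ to lie outside that region, hence to violate at least one constraint of $\mathcal{J}^{o-1}$. Your explicit use of convexity and KKT sufficiency to lift optimality on the active set to optimality over the inequality-relaxed program merely makes rigorous what the paper asserts with ``due to self-compatibility''; the underlying argument is identical.
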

\begin{proof}
Let $\mathcal{J}^{o-1}$ be a self-compatible set of constraints, the optimal solution of the minimization problem on the subset of constraint $\mathcal{J}^{o-1}$ (here the projection of $\Delta \tilde{\bm{\phi}}^n$ onto $\mathcal{J}^{o-1}$) is optimal for this subset (due to self-compatibility) with a value $C^{o-1}$. Then, suppose that the optimal solution on the subset $\tilde{\mathcal{J}}^o$ has a cost $C^{o}$ smaller than $C^{o-1}$. In that case, it indicates that $\mathcal{J}^{o-1}$ is no longer feasible by the optimal solution onto $\tilde{\mathcal{J}}^o$ since no feasible point defined by the feasible region $\mathcal{J}^{o-1}$ has a smaller cost than  $C^{o-1}$. Thus, at least one constraint in $\mathcal{J}^{o-1}$ must be broken by the optimal solution onto the subset $\tilde{\mathcal{J}}^o$.
\end{proof}

\begin{lemma}
If the optimal solution of a constrained optimization problem using a self-compatible set of constraints $\mathcal{J}^{o-1}$ results in the breaking of additional constraints $\mathcal{J}_b$ (\ie constraints outside of $\mathcal{J}^{o-1}$ are violated), then the optimal solution over the set of constraints $\mathcal{J}^{o-1} \cup \mathcal{J}_b$ must yield a cost function value higher or equal than the optimal solution with the constraints set $\mathcal{J}^{o-1}$ alone.
\label{lemma::cost_increase}
\end{lemma}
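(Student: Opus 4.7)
My proof plan is to use the standard monotonicity property of minimization under nested feasible sets: adding constraints can never decrease the minimum. The crucial observation is that the feasible region induced by $\mathcal{J}^{o-1}\cup\mathcal{J}_b$ is a subset of the feasible region induced by $\mathcal{J}^{o-1}$ alone, because any point satisfying more constraints automatically satisfies fewer.

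First, I would introduce notation: let $\Delta\bm{\phi}^*_{o-1}$ denote the optimal solution of the sub-projection problem from Equation \eqref{eq::min_update} restricted to the constraints in $\mathcal{J}^{o-1}$, with cost $C^{o-1}$. By hypothesis this minimizer violates at least one constraint in $\mathcal{J}_b$, so it lies outside the feasible set associated with $\mathcal{J}^{o-1}\cup\mathcal{J}_b$. Next, let $\Delta\bm{\phi}^*_{o-1\cup b}$ denote any minimizer of the same objective over $\mathcal{J}^{o-1}\cup\mathcal{J}_b$, with cost $C^{o-1\cup b}$. Because this point satisfies every constraint in $\mathcal{J}^{o-1}$, it is admissible for the less-restricted problem, and optimality of $\Delta\bm{\phi}^*_{o-1}$ over $\mathcal{J}^{o-1}$ gives $C^{o-1} \leq C^{o-1\cup b}$ at once, which is the claimed inequality.

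I do not anticipate a real obstacle here — the argument is a one-line set-inclusion observation applied to the same convex objective $\tfrac{1}{2}\Vert\Delta\tilde{\bm{\phi}}^n-\Delta\bm{\phi}^n\Vert^2$. The only subtlety worth mentioning is why the inequality can be non-strict: equality arises precisely when an alternative minimizer of the less-restricted problem also lies on the boundary of the added constraints in $\mathcal{J}_b$, which is compatible with the hypothesis that $\Delta\bm{\phi}^*_{o-1}$ itself violates them. The convexity of the projection cost (strictly convex in $\Delta\bm{\phi}^n$) is what ensures that each minimization is well posed and that the minimizers referenced above exist whenever the feasible sets are non-empty, so it is implicitly invoked but not required for the inequality itself.
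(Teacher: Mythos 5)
Your proof is correct, and it reaches the conclusion by a slightly different (and in fact more elementary) route than the paper. You argue by pure feasible-set nesting: enforcing $\mathcal{J}^{o-1}\cup\mathcal{J}_b$ yields a feasible region contained in the one defined by $\mathcal{J}^{o-1}$ alone, so any minimizer of the enlarged problem is admissible for the smaller one, and optimality of the latter's minimizer gives $C^{o-1}\leq C^{o-1\cup b}$ immediately. The paper instead appeals to the geometry of the projection objective: since the cost is the squared distance $\tfrac{1}{2}\Vert\Delta\tilde{\bm{\phi}}^n-\Delta\bm{\phi}^n\Vert^2$ and the active-set projection is onto intersections of the constraint hyperplanes, it invokes the Pythagorean theorem to say that projecting onto a more constrained (nested) set can only move the projected point further from $\Delta\tilde{\bm{\phi}}^n$. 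Both arguments rest on the same nestedness of the feasible regions; yours has the advantage of working for any objective, needing neither convexity nor the distance structure, and of being fully rigorous as stated, while the paper's Pythagorean phrasing is tailored to the projection setting and foreshadows the orthogonal-decomposition reasoning reused later in the convergence proof of Theorem~\ref{theo::active_set_proof} (where the same inequality between projections onto nested sets is invoked). Your remark that the violation hypothesis on $\mathcal{J}_b$ is not actually needed for the non-strict inequality, and only explains why equality may still occur, is accurate and worth keeping.
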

\begin{proof}
The optimal solution for the constrained optimization problem on a self-compatible set $\mathcal{J}^{o-1}$ minimizes the cost function for that specific set. If additional constraints outside of $\mathcal{J}^{o-1}$ are broken, incorporating these constraints into the active set creates a more constrained feasible region, necessitating moving away from the previous optimal point to satisfy all constraints. By the Pythagorean theorem, moving to a more constrained set requires an adjustment from $\Delta \tilde{\bm{\phi}}^n$, resulting in a higher cost than the projection onto $\mathcal{J}^{o-1}$ alone. Thus, the cost increases when additional broken constraints are included.
\end{proof}

With these properties established, Theorem \ref{theo::active_set_proof} can now be introduced to demonstrate the convergence of the active set algorithm.

\begin{theorem}
The algorithm of Section \ref{sec::active_set} presented in Figure \ref{fig::active_set_flowchart} converges to a stable active set that satisfies the KKT criteria of Equations \eqref{eq::lagrangien_gradient_proj} to \eqref{eq::complementary_slackness}
\label{theo::active_set_proof}
\end{theorem}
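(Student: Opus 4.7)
The plan is to establish convergence by proving three things in sequence: (i) the sequence of objective values $\Vert\Delta\tilde{\bm{\phi}}^n - \Delta\bm{\phi}^{n,o}\Vert$ is monotonically non-decreasing in the outer iteration index $o$, (ii) this monotonicity combined with the finiteness of the constraint index set forces termination in finitely many iterations, and (iii) the exit condition of step 9 directly encodes every KKT criterion \eqref{eq::lagrangien_gradient_proj}--\eqref{eq::complementary_slackness}. Throughout, I would lean on the invariant, built into step 7, that the set $\mathcal{J}^o$ obtained at the end of each outer iteration is self-compatible, so that Lemmas \ref{lemma::constraint_violation} and \ref{lemma::cost_increase} can be applied at every step.

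First I would establish the monotonicity claim. Step 5 produces the exact minimizer of $\Vert \Delta\tilde{\bm{\phi}}^n - \Delta\bm{\phi}\Vert^2$ subject to equality on all constraints of the current trial set $\tilde{\mathcal{J}}^o$, via the linear system encoded by \eqref{eq::schur_complement_1}--\eqref{eq::schur_complement_6}. The danger is that bulk removal of indices with negative multipliers in step 7 may leave $\tilde{\mathcal{J}}^o \not\supseteq \mathcal{J}^{o-1}$, so the feasible region can enlarge and the objective can drop, violating \eqref{eq::cost_increase_1}. Lemma \ref{lemma::constraint_violation} identifies exactly this pathology: a decrease of the distance from $\Delta\tilde{\bm{\phi}}^n$ forces at least one constraint of the previous self-compatible $\mathcal{J}^{o-1}$ to be violated; Lemma \ref{lemma::cost_increase} then guarantees that reinserting such a constraint (step 6c) cannot bring the objective below $C^{o-1}$. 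The fallback to single-constraint manipulation in step 6d handles the residual case where bulk removal cleared too many constraints at once, reducing the situation to the classical single-pivot active set update on a strictly convex quadratic, for which monotonicity of the KKT-projected iterate is a standard result.

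Next I would use monotonicity to conclude termination. Because the total number of constraint indices is finite, there are only finitely many candidate active sets; because $\Vert\Delta\tilde{\bm{\phi}}^n - \Delta\bm{\phi}^{n,o}\Vert$ is non-decreasing and bounded above by the distance to the true projection of $\Delta\tilde{\bm{\phi}}^n$ onto $\mathcal{Q}$ (which is finite under the stated assumptions that $\mathcal{Q}$ is feasible and the active constraints are linearly independent), no self-compatible active set can be revisited with a strictly larger objective. The only remaining way strict monotonicity can fail across consecutive outer iterations is when neither the projection nor the active set changes, which is precisely the exit condition of step 9. Reading off the KKT conditions from that exit condition is then immediate: step 3 certifies \eqref{eq::ineq_constraint_kkt}, step 7 certifies \eqref{eq::dual_feasibility}, complementary slackness \eqref{eq::complementary_slackness} is trivial off $\mathcal{J}^o$ (where $y_j=0$) and follows on $\mathcal{J}^o$ from the enforced linearized equality, Equation \eqref{eq::lagrangien_gradient_proj} is exactly the linear system solved at step 5 via \eqref{eq::schur_complement_1}--\eqref{eq::schur_complement_6}, and the equality constraints \eqref{eq::eq_constraint_kkt} hold because step 1 inserts them into $\mathcal{J}^0$ and they are never removed.

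The main obstacle I expect is the second paragraph: showing that the backtracking mechanism of steps 6a--6d reliably restores monotonicity after bulk removal without creating a new cycle. Lemmas \ref{lemma::constraint_violation} and \ref{lemma::cost_increase} supply the one-step reasoning cleanly, but one must still rule out the pathological scenario in which steps 6 and 7 alternately add and remove the same bulk collections of constraints across successive outer iterations. The step 6d fallback to single-constraint manipulation is the crucial safety net, and verifying that this fallback itself cannot cycle -- which typically invokes a standard anti-cycling argument on single active-set pivots under linear independence of the active constraints -- is likely the most delicate piece of the proof.
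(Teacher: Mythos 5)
Your proposal follows essentially the same route as the paper's own proof: monotonicity of the merit function $\Vert\Delta\tilde{\bm{\phi}}^n - \Delta\bm{\phi}^{n,o}\Vert$ enforced by the step 6 check, with Lemmas \ref{lemma::constraint_violation} and \ref{lemma::cost_increase} justifying the backtracking/reinsertion mechanism, a fallback to classical single-constraint active-set manipulation (whose convergence is taken from the standard literature) when bulk operations fail, and a reading of the exit conditions to verify each KKT criterion \eqref{eq::lagrangien_gradient_proj}--\eqref{eq::complementary_slackness}. Your treatment is correct, and your explicit finite-termination argument (finitely many candidate active sets plus non-decreasing merit) is a slightly more careful phrasing of what the paper asserts more loosely, but it does not constitute a different approach.
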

\begin{proof}
To ensure convergence, two properties of the algorithm must be satisfied. First, the algorithm is monotonic, meaning each set obtained at the end of each iteration is new. Second, at convergence, all properties of the KKT criteria (Equations \eqref{eq::lagrangien_gradient_proj} to \eqref{eq::complementary_slackness}) are satisfied.

Since each iteration produces a self-compatible set due to the exit condition in step 7, according to Lemma \ref{lemma::cost_increase}, if the projection of $ \Delta \tilde{\bm{\phi}}^n $ onto a self-compatible set $ \mathcal{J}^{o-1} $ leaves some constraints violated ($\mathcal{J}_b$), then optimal projection onto an expanded set $ \mathcal{J}^{o-1} \cup \mathcal{J}_b =  \tilde{\mathcal{J}}^o$ will yield a higher cost function. This result justifies the algorithm’s design to check for an increase in the cost function in step 6, as any decrease would indicate that the previously self-compatible set $ \mathcal{J}^{o-1} $ is no longer feasible under the new constraints. This can then be use to show monotonicity.

Let $F(\Delta\bm{\phi}^{n,o})$ represent a merit function, chosen here to be directly linked to the cost function $F(\Delta\bm{\phi}^{n,o})=\left\Vert\Delta\bm{\Tilde{\phi}}^n-\Delta \bm{\phi}^{n,o} \right\Vert$. The algorithm is monotonic if we can show that:
\begin{align}
       F(\Delta\bm{\phi}^{n,o+1})\geq F(\Delta\bm{\phi}^{n,o}).
       \label{eq::cost_increase_2}
\end{align}
The algorithm, through step 6, explicitly checks for monotonicity. Two scenarios arise: the Inequation \eqref{eq::cost_increase_2} is satisfied or not.

If an increase in the cost function is observed, the algorithm simply moves to the next step, trying to remove constraints with negative Lagrange multipliers to ensure optimality.

When step 6 fails to satisfy the cost increase condition, the algorithm falls back to single-constraint manipulation to re-establish feasible progress. This fallback to individual constraint handling mirrors classical active set methods, ensuring that each constraint addition or removal is carefully validated in sequence. By adapting in this way, the algorithm effectively manages complex interactions between constraints, especially when bulk operations fail to ensure progress toward the optimal solution.

If a decrease in the cost function is observed, according to Lemma \ref{lemma::constraint_violation}, it implies that at least one constraint in $\mathcal{J}^{o-1}$ is violated. The algorithm then reverts the active set to reinclude all constraints from a copy of the previous active set $\mathcal{J}^{o-1} $. By doing so, it initially reverts to a set of constraints that guarantee an increase in the cost function. Since, in this case, $\mathcal{J}^{o-1} \subset \tilde{\mathcal{J}}^o$, then by Pythagorean inequality, the projection over $\tilde{\mathcal{J}}^o$ will be further away than the projection over $\mathcal{J}^{o-1} $. This ensures that condition 6.c will be satisfied initially. Then, at step 6.d, the constraint with the most negative Lagrange multiplier is removed from the active set $\tilde{\mathcal{J}}^o$, as well as the copy of the previous active set  $\mathcal{J}^{o-1} $. Removing a single constraint with a negative Lagrange multiplier from $\tilde{\mathcal{J}}^o$ guarantees, by the complementary slackness (Equation \eqref{eq::complementary_slackness}), that it will be satisfied by the projection on the updated set $\tilde{\mathcal{J}}^o$. This then still ensures that condition 6.c will be satisfied. Removing the constraint from the copy $\mathcal{J}^{o-1} $ also prevents reintegrating it again in the active set, which could lead to a cycle of repeatedly removing and reading constraints.

However, if the algorithm ends up removing multiple constraints from the copy of $\mathcal{J}^{o-1}$ due to repeatedly not satisfying the condition of step 6, then reintroducing these constraints does not guarantee that condition 6.c will be met. In this scenario, Lemma \ref{lemma::constraint_violation} confirms that at least one of the constraints removed from the copy of $\mathcal{J}^{o-1}$ is now broken, signifying that the current projection is infeasible with respect to this self-compatible set. The algorithm then examines all constraints removed from the copy of $\mathcal{J}^{o-1}$ and reintroduces the most binding constraint that is now broken in $\tilde{\mathcal{J}}^o$. After updating $\tilde{\mathcal{J}}^o$ with this constraint, the algorithm re-evaluates the projection at step 5.

If a decrease in cost is observed again, the algorithm proceeds by reintroducing constraints one by one from the copy of $\mathcal{J}^{o-1}$. This selective reintroduction of constraints effectively reduces the algorithm to a classical active set method, where each constraint is manipulated individually to validate feasibility. This fallback ensures that the algorithm can progress even when bulk constraint operations fail by verifying each constraint addition or removal in sequence.

The convergence of such single-constraint manipulation algorithms is well-documented in the literature, as shown in the works of Wong and Nocedal \textit{et al.} \cite{wong2011active, nocedal1999numerical}. The algorithm will then continue iterating between steps 5 to 7, adding or removing constraints one at a time whenever bulk manipulation fails to meet the condition of step 6. This process continues until all constraints in $\tilde{\mathcal{J}}^o$ have positive Lagrange multipliers (satisfying the exit condition of step 7) and Inequality \eqref{eq::cost_increase_2} is satisfied.

This guarantees monotonicity between iterations of the algorithm and ensures convergence to a stable active set.

Next, the algorithm guarantees that all KKT conditions of Equations \eqref{eq::lagrangien_gradient_proj} to \eqref{eq::complementary_slackness} are satisfied. The exit criteria enforce that a new iteration begins whenever a constraint is added or removed from the active set. The algorithm continues until the projection on the current active set satisfies all constraints without necessitating further additions or removals. The final exit condition is achieved when the following KKT criteria are met at the projection point:

\begin{enumerate}
    \item Stationarity: The optimality condition of the projection inherently satisfies stationarity, thereby fulfilling Equation \eqref{eq::lagrangien_gradient_proj}.

    \item Feasibility: Upon exiting, all equality constraints are satisfied (Equation \eqref{eq::eq_constraint_kkt}), and all inequality constraints are met without violation (Inequation \eqref{eq::ineq_constraint_kkt}). This ensures that the solution lies within the feasible region defined by $\mathcal{Q}$. Otherwise, the algorithm would have added constraints in the last evaluation of step 3, forcing a new iteration.
    
    \item Dual feasibility: The algorithm ensures that all Lagrange multipliers are non-negative at the final projection, thereby meeting the requirement of Inequation \eqref{eq::dual_feasibility}. This is due to the exit condition of step 7, where the algorithm removes constraints with a negative Lagrange multiplier.

    \item Complementary slackness: The final active set guarantees that each constraint is either binding or has a positive Lagrange multiplier, ensuring that Equation \eqref{eq::complementary_slackness} is satisfied.
\end{enumerate}

Thus, the algorithm converges monotonically to a stable active set that satisfies all KKT conditions.
\end{proof}

Although the algorithm performs bulk manipulation of constraints during both the addition and removal stages, it maintains monotonicity by reverting to single-constraint adjustments whenever bulk manipulation fails to satisfy step 6 conditions. This adaptive strategy ensures the satisfaction of the exit condition in step 7, thus guaranteeing convergence. By prioritizing bulk manipulation initially, the algorithm significantly reduces the number of evaluations required to reach the optimal solution for the point projection problem. In practice, the conditions in steps 6 and 6c are rarely not respected, making most operations using bulk manipulation of constraints for the projection problems. Appendix \ref{appendix::frequency_6_6c_fails} presents some results for the frequency at which steps 6 and 6c fail. The efficiency gain in the handling of univariate constraints is sufficient to enable the direct integration of univariate constraints in the projection step. Finally, note that this formulation of the algorithm can still get stuck in the definition of the active set if too many constraints are broken at the current solution ($\bm{\phi}^n$), in which case the algorithm will try to reduce the step size indefinitely (due to step 4). This could be prevented by limiting the number of constraints added and only selecting the most broken constraints, similar to what is done in the work of Schittkowski \cite{schittkowski2009active}.

\subsection{Post--projection step adjustment}
\label{sec::consideration_kkt}
Generally, the PGD algorithm does not use the KKT criteria or the feasibility of non-linear constraints in the definition of step size ($\alpha_n$). This means that the impact of the constraints on the step size is not usually considered. This section presents an approach to improve this aspect of the PGD algorithm by considering the effect of the constraints on the step size through the KKT criteria and the constraints satisfaction. First, let's introduce the KKT for the optimization problem laid out by Equations \eqref{eq::min_cost_function}, \eqref{eq::eq_constraints_function}, and \eqref{eq::ineq_constraints_function}, the Lagrangian take the form :
\begin{align}
    \mathcal{L}(\bm{\phi}) = \mathcal{C}(\bm{\phi}) + \sum_{j=0}^{m} y_j f_j(\bm{\phi}).
    \label{eq::lagrangian_optimization_problem}
\end{align}
From Equation \eqref{eq::lagrangian_optimization_problem}, the following KKT optimality criteria follow:
\begin{align}
    \dd {\mathcal{L}(\bm{\phi}^n)}{\bm{\phi}^n} &= \dd {\mathcal{C}(\bm{\phi}^n)}{\bm{\phi}^n}  + \sum_{j=0}^{m} y_j\dd {f_j(\bm{\phi}^n)}{\bm{\phi}^n}  = \bm{0}, \label{eq::lagrangien_gradient} \\
    f_j(\bm{\phi}) &= a_j \text{ for } j=0,1,2, \ldots, l, \\
    f_j(\bm{\phi}) &\leq a_j \text{ for } j=l+1, l+2, \ldots, m, \\
     y_j &\geq 0 \text{ for } j=l+1, l+2, \ldots, m.
    \label{eq::dual_feasibility0}
    \\
    (f_j(\bm{\phi}) - a_j) y_j &= 0 \text{ for } j=l+1, l+2, \ldots, m.
\end{align}
From the first criterion (Equation \eqref{eq::lagrangien_gradient}), the gradient of the Lagrangian should tend toward zero at local minima. This information can be used to adjust the step length after projecting $\bm{\tilde{\phi}}^{n+1}$ onto $\mathcal{Q}$. To do this, we approximate $\dd {\mathcal{L}(\bm{\phi}^n)}{\bm{\phi}^n}$ at the current iteration, assuming that the active set of constraints remains unchanged relative to a change in $\bm{\phi}$ and that all constraints are binding (meaning that the current iterate $\bm{\phi}^n$ is at the limit of the constraints in the active set):
\begin{align}
     \dd {\mathcal{L}(\bm{\phi}^n)}{\bm{\phi}^n} \approx \frac{\Delta \bm{\phi}^n}{\alpha^n}.
\end{align}
This approximation does not consider the effect of the inertial component of the update vector ($\beta^n=0$ in Equation \eqref{eq::delta_phi_tilde}). This implies that the larger the inertial component ($\beta^n(\bm{\phi}^n -\bm{\phi}^{n-1})$), relative to the size of the gradient portion ($\alpha^n\dd {\mathcal{C}(\bm{\phi}^n)}{\bm{\phi}^n} $), the worst this approximation will become. Despite potential inaccuracies, the variation of this approximation of the Lagrangian gradient can still provide useful information, such as the variation of the constraints between iterations. This approximation of the Lagrangian can then be used to define a post-projection step length adjustment by a factor $\gamma^n$. Applying this correction factor results in the following modification of the update (Equation \eqref{eq::phi_nplusone}):
\begin{align}
    \bm{\phi}^{n+1} = \bm{\phi}^n - \frac{\gamma^n}{\alpha^n} \Delta \bm{\phi}^n,
\end{align}
where $\gamma^n$ is a step size modification factor scaled relative to the step size $\alpha^n$. Therefore, methods such as the Barzilai-Borwein method can be used to evaluate $\gamma^n$ based on the gradient variation of the Lagrangian. Defining the step in such a way accounts for the variations of the Lagrangian caused by the impact of the constraints, which can improve convergence. Further details on the exact formulation used to define $\gamma^n$ based on the Lagrangian gradient approximation are given in Section \ref{sec::convergence_analysis}. 

However, applying $\gamma^n$ directly to the step $\Delta \bm{\phi}^n$ risks breaking the constraints if $\frac{\gamma^n}{\alpha^n}>1$. To avoid this issue, the step modification is only applied to the null constraint variation space of the update vector $\Delta \bm{\phi}^n$, if $\frac{\gamma^n}{\alpha^n}>1$. To do this, $\Delta \bm{\phi}^n$ is split into two components: the component orthogonal to the constraint gradients ($\dd {f_j(\bm{\phi}^n)}{\bm{\phi}^n}$), noted as $\Delta \bm{\phi}^n_\perp$, and the remaining portion of the vector, noted as $\Delta \bm{\phi}^n_\parallel$. This decomposition is performed using a Gram--Schmidt process \cite{strang2012linear}. In practice, the decomposition can be performed by either considering only the constraints within the active set or all constraints. Using the former approach leads to a more aggressive update, which can occasionally break some of the constraints, although these will be identified and corrected in subsequent iterations. 
\begin{align}
    \bm{\phi}^{n+1} = \bm{\phi}^n - \min\left(1,\frac{\gamma^n}{\alpha^n}\right) \Delta \bm{\phi}^n_\parallel - \frac{\gamma^n}{\alpha^n} \Delta \bm{\phi}^n_\perp.
\end{align}

This update prevents violating linear constraints, but not nonlinear ones. To mitigate this issue, an additional relaxation parameter $\mu \in (0,1]$ can be added to relax the orthogonal update when constraints are broken. The relaxation parameter $\mu$ is applied to $\Delta \bm{\phi}^n_\perp$ with an exponent equal to the consecutive number of steps where at least one constraint was broken. Additionally, in the presence of broken constraints, to correct the error, $\gamma$ is not applied to the constraint variation component ($\Delta \bm{\phi}^n_\parallel$). This gives the final form of the update:
\begin{align}
    \bm{\phi}^{n+1} =
    \begin{cases}
       \bm{\phi}^n - \min(1,\frac{\gamma^n}{\alpha^n}) \Delta \bm{\phi}^n_\parallel - \frac{ (\mu)^{^h}\gamma^n}{\alpha^n} \Delta \bm{\phi}^n_\perp, & \text{if }  \bm{\phi}^n \in \mathcal{Q}, \\
       \bm{\phi}^n - \Delta \bm{\phi}^n_\parallel - \frac{ (\mu)^{^h} \gamma^n}{\alpha^n} \Delta \bm{\phi}^n_\perp,  & \text{otherwise},
    \end{cases}
    \label{eq::update_with_mu}
\end{align}
where $h$ is the number of consecutive steps where at least one of the constraints was broken. Adding this parameter causes the optimizer to give more weight to constraint correction relative to optimization if constraints remain broken over multiple successive steps. When the constraint is once again satisfied, $h$ can either be reset to  0 directly, or decreased smoothly back to 0,  \eg by subtracting 1 following each optimization step where constraints are respected. The latter approach is recommended and used in the rest of this paper since it is more conservative and helps prevent the breaking of constraints once they are once again satisfied. A value of $\mu=0.95$ is sufficient for most cases, as $(\mu)^{^h}$ decreases sufficiently fast when a constraint is broken for consecutive iterations without relaxing the optimization process. In practice, a tolerance $\epsilon_i$ is added to the constraints before defining it as broken to avoid needlessly reducing the step size when the solution is on the boundary of $\mathcal{Q}$. The tolerance $\epsilon_i$ is a user-defined parameter that reflects the user's tolerance for each constraint.
Additionally, two approaches can be used to avoid breaking univariate constraints due to the modification of the step with $\gamma^n$. First, a traditional min--max clipping approach can be applied to the new step obtained. However, since the component of the vector that is scaled by $\gamma^n$ is in the null constraint variation space, the constraint modification due to the clipping is usually negligible. The second alternative corresponds to simply reapplying the projection step to the point obtained by the new step with the Lagrangian estimation scaled by $\gamma^n$ to the feasible set $\mathcal{Q}$ using the same projection algorithm previously used and described in Section \ref{sec::active_set}. The latter option increases the update evaluation's cost since it requires another projection step's evaluation. This approach completely removes the need to use min-max clipping and avoids the issues that can come with it. However, this approach was not used here since it did not significantly modify the results, as the min--max clipping effect on the constraints was negligible, as mentioned before.

\subsection{Convergence analysis}
\label{sec::convergence_analysis}
Even though the convergence properties of the Inertial Projected Gradient Descent are well known \cite{nishioka2023inertial}, this section aims to demonstrate that these properties still hold when the step is modified, as discussed in Section \ref{sec::consideration_kkt}. The convergence analysis will further serve to provide a formal definition for the inertial parameter $\alpha ^n$, $\gamma^n$, and $\beta^n$. Assuming that the gradient of the cost function is Lipschitz continuous and that the solution remains within the feasible domain($\bm{\phi}^n \in \mathcal{Q}$), the following inequality holds:
\begin{equation}
    \begin{split}
    \mathcal{C}(\bm{\phi^{n+1}})\leq &\;\mathcal{C}(\bm{\phi^{n}})- \dd {\mathcal{C}(\bm{\phi}^n)}{\bm{\phi}^n} \cdot \left( \min\left(1,\frac{\gamma^n}{\alpha^n}\right)\Delta\bm{\phi^{n}}_\parallel +\frac{\gamma^n}{\alpha^n}\Delta\bm{\phi^{n}}_\perp\right)\\ &+\frac{L}{2} \left\Vert- \min\left(1,\frac{\gamma^n}{\alpha^n}\right)\Delta\bm{\phi^{n}}_\parallel-\frac{\gamma^n}{\alpha^n}\Delta\bm{\phi^{n}}_\perp\right\Vert^2.
    \end{split}
\end{equation}

Here, $L$ is the Lipschitz constant of the gradient of the cost function. From this, for $\mathcal{C}(\bm{\phi^{n+1}})\leq \mathcal{C}(\bm{\phi^{n}})$ to hold the following inequation must also be true:
\begin{equation}
    \begin{split}
     -\dd {\mathcal{C}(\bm{\phi}^n)}{\bm{\phi}^n} \cdot \left(\min(1,\frac{\gamma^n}{\alpha^n})\Delta\bm{\phi^{n}}_\parallel +\frac{\gamma^n}{\alpha^n}\Delta\bm{\phi^{n}}_\perp\right) +\frac{L}{2} \left\Vert-\Delta\bm{\phi^{n}}_\parallel-\frac{\gamma^n}{\alpha^n}\Delta\bm{\phi^{n}}_\perp\right\Vert^2 \leq 0.
    \end{split}
\end{equation}

Substituting $\Delta\bm{\phi^{n}}_\parallel$ and $\Delta\bm{\phi^{n}}_\perp$ by their definition as projections on two orthogonal subspaces,
 \begin{align}
 \Delta\bm{\phi^{n}}_\parallel&=\lambda_\parallel \left(\alpha^n  \dd {\mathcal{C}(\bm{\phi}^n)}{\bm{\phi}^n}_\parallel -\beta^n(\bm{\phi}^n-\bm{\phi}^{n-1})_\parallel\right),\\
 \Delta\bm{\phi^{n}}_\perp&=\lambda_\perp\left(\alpha^n  \dd {\mathcal{C}(\bm{\phi}^n)}{\bm{\phi}^n}_\perp-\beta^n(\bm{\phi}^n-\bm{\phi}^{n-1})_\perp\right),
 \end{align}
where $\lambda_\parallel$ and $\lambda_\perp$ are real constants between 0 and 1, we obtain
  \begin{equation}
    \begin{split}
- \lambda_\parallel\min\left(1,\frac{\gamma^n}{\alpha^n}\right)\dd {\mathcal{C}(\bm{\phi}^n)}{\bm{\phi}^n} \cdot\left(\alpha^n   \dd {\mathcal{C}(\bm{\phi}^n)}{\bm{\phi}^n}_\parallel -\beta^n(\bm{\phi}^n-\bm{\phi}^{n-1})_\parallel\right)\\ 
     -\frac{\gamma^n\lambda_\perp}{\alpha^n} \dd {\mathcal{C}(\bm{\phi}^n)}{\bm{\phi}^n} \cdot \left(\alpha^n  \dd {\mathcal{C}(\bm{\phi}^n)}{\bm{\phi}^n}_\perp-\beta^n(\bm{\phi}^n-\bm{\phi}^{n-1})_\perp \right) \\+\frac{L}{2} \left\Vert-\lambda_\parallel\min\left(1,\frac{\gamma^n}{\alpha^n}\right)\left(\alpha^n  \dd {\mathcal{C}(\bm{\phi}^n)}{\bm{\phi}^n}_\parallel -\beta^n(\bm{\phi}^n-\bm{\phi}^{n-1})_\parallel\right)\right\Vert^2 \\+ \frac{L}{2} \left\Vert-\frac{\gamma^n\lambda_\perp}{\alpha^n}\left(\alpha^n  \dd {\mathcal{C}(\bm{\phi}^n)}{\bm{\phi}^n}_\perp-\beta^n(\bm{\phi}^n-\bm{\phi}^{n-1})_\perp\right)\right\Vert^2 \leq 0.
    \end{split}
\end{equation}

Regrouping terms  :
\begin{equation}
    \begin{split}
\left( -\alpha^n \lambda_\parallel\min\left(1,\frac{\gamma^n}{\alpha^n}\right)  + 0.5 {\alpha^n}^2 L \lambda_\parallel^2 \min\left(1,\frac{\gamma^n}{\alpha^n}\right)^2\right) \left\Vert\dd {\mathcal{C}(\bm{\phi}^n)}{\bm{\phi}^n}_\parallel \right\Vert^2 \\+   \left( -\gamma^n \lambda_\perp  + 0.5 {\gamma^n}^2 L \lambda_\perp^2 \right)\left\Vert\dd {\mathcal{C}(\bm{\phi}^n)}{\bm{\phi}^n}_\perp \right\Vert^2  \\+\beta^n\left(  \lambda_\parallel\min\left(1,\frac{\gamma^n}{\alpha^n}\right)-\alpha^n L \lambda_\parallel^2\min\left(1,\frac{\gamma^n}{\alpha^n}\right)^2\right)\dd {\mathcal{C}(\bm{\phi}^n)}{\bm{\phi}^n}_\parallel \cdot (\bm{\phi}^n-\bm{\phi}^{n-1})_\parallel\\+\beta^n\left(\lambda_\perp \frac{\gamma^n}{\alpha^n}-\alpha^n  L \lambda_\perp^2\left(\frac{\gamma^n}{\alpha^n}\right)^2\right)\dd {\mathcal{C}(\bm{\phi}^n)}{\bm{\phi}^n}_\perp \cdot (\bm{\phi}^n-\bm{\phi}^{n-1})_\perp\\+  0.5 {\beta^n}^2 L \lambda_\parallel^2\min\left(1,\frac{\gamma^n}{\alpha^n}\right)^2\left\Vert (\bm{\phi}^n-\bm{\phi}^{n-1})_\parallel \right\Vert^2\\+0.5  {\beta^n}^2 L \lambda_\perp^2\left(\frac{\gamma^n}{\alpha^n}\right)^2 \left\Vert (\bm{\phi}^n-\bm{\phi}^{n-1})_\perp \right\Vert^2\leq 0.
\label{eq::developped_convergence_regroup_terms}
    \end{split}
\end{equation}

Three sufficient conditions are generated from Equation \eqref{eq::developped_convergence_regroup_terms} to guarantee the inequality. The first two ensure that the variation is negative, and the third one ensures that the variation of the solution due to the presence of inertia is smaller than the contribution of the gradient descent:
\begin{align}
    \lambda_\parallel \alpha^n\min\left(1,\frac{\gamma^n}{\alpha^n}\right)&\leq\frac{2}{L},
   \label{eq::constraint_on_alpha}\\
    \lambda_\perp\gamma^n&\leq\frac{2}{L},
    \label{eq::constraint_on_gamma}
\end{align}
\begin{equation}
    \begin{split}
    {\Bigg\vert} \beta^n\left(  \lambda_\parallel\min\left(1,\frac{\gamma^n}{\alpha^n}\right)-\alpha^n L \lambda_\parallel^2\min\left(1,\frac{\gamma^n}{\alpha^n}\right)^2\right)\dd {\mathcal{C}(\bm{\phi}^n)}{\bm{\phi}^n}_\parallel \cdot (\bm{\phi}^n-\bm{\phi}^{n-1})_\parallel\\+\;\beta^n\left(\lambda_\perp \frac{\gamma^n}{\alpha^n}-\alpha^n  L \lambda_\perp^2\left(\frac{\gamma^n}{\alpha^n}\right)^2\right)\dd {\mathcal{C}(\bm{\phi}^n)}{\bm{\phi}^n}_\perp \cdot (\bm{\phi}^n-\bm{\phi}^{n-1})_\perp\\+\;  0.5 {\beta^n}^2 L \lambda_\parallel^2\min\left(1,\frac{\gamma^n}{\alpha^n}\right)^2\left\Vert (\bm{\phi}^n-\bm{\phi}^{n-1})_\parallel \right\Vert^2\\+\;0.5  {\beta^n}^2 L \lambda_\perp^2\left(\frac{\gamma^n}{\alpha^n}\right)^2 \left\Vert (\bm{\phi}^n-\bm{\phi}^{n-1})_\perp \right\Vert^2{\Bigg\vert}\\ \leq{\Bigg\vert} \left( -\alpha^n \lambda_\parallel\min\left(1,\frac{\gamma^n}{\alpha^n}\right)  + 0.5 {\alpha^n}^2 L \lambda_\parallel^2 \min\left(1,\frac{\gamma^n}{\alpha^n}\right)^2\right) \left\Vert\dd {\mathcal{C}(\bm{\phi}^n)}{\bm{\phi}^n}_\parallel \right\Vert^2 \\+   \left( -\gamma^n \lambda_\perp  + 0.5 {\gamma^n}^2 L \lambda_\perp^2 \right)\left\Vert\dd {\mathcal{C}(\bm{\phi}^n)}{\bm{\phi}^n}_\perp \right\Vert^2 {\Bigg\vert}.
    \label{eq::constraint_inertia}
    \end{split}
\end{equation}
Taking the coefficient $\lambda_\parallel=1$, $\lambda_\perp=1$, and the terms $\min\left(1,\frac{\gamma^n}{\alpha^n}\right)=1$ as they are the most constraining values for the size of the step $\alpha$ and $\gamma$, the Inequations \eqref{eq::constraint_on_alpha} and \eqref{eq::constraint_on_gamma} simplify to a single inequation: 
\begin{align}
    \gamma^n&\leq\frac{2}{L}.
    \label{eq::constraint_gamma_final}
\end{align}

Inequation \eqref{eq::constraint_inertia} can be hard to evaluate; as such, a good estimate of the bound on $\beta$ can be obtained by simplifying this inequation using the assumption that the active set of constraints is empty, meaning $\lambda_\parallel=1$ and  $\lambda_\perp=1$.Additionally, If the same kind of formula is used to define $\alpha^n$ and $\gamma^n$, it also implies that  $\alpha^n \approx \gamma^n$ as the gradient of the cost function dominates the Lagrangian gradient. Applying these assumptions reduces the Inequation \eqref{eq::constraint_inertia} to:
\begin{align}
    \beta^n \leq &\; \frac{1}{L \left\Vert \bm{\phi}^n - \bm{\phi}^{n-1} \right\Vert^2} \left(
       -(1-\alpha^n L) \dd {\mathcal{C}(\bm{\phi}^n)}{\bm{\phi}^n} \cdot (\bm{\phi}^n - \bm{\phi}^{n-1}) \right. \nonumber \\
    & \left. + \left( 
            \left( (1-\alpha^n L) \dd {\mathcal{C}(\bm{\phi}^n)}{\bm{\phi}^n} 
            \cdot (\bm{\phi}^n - \bm{\phi}^{n-1}) \right)^2 \right. \right. \nonumber \\
    & \left. \left. +\; 2 L \left\Vert \bm{\phi}^n - \bm{\phi}^{n-1} \right\Vert^2 
            \left| -\alpha^n + 0.5 (\alpha^n)^2 L \right| 
            \left\Vert \dd {\mathcal{C}(\bm{\phi}^n)}{\bm{\phi}^n} \right\Vert^2 
        \right)^{\frac{1}{2}} \right).
    \label{eq::constraint_first_simplification}
\end{align}

 Additionally, if it is assumed that $\dd {\mathcal{C}(\bm{\phi}^n)}{\bm{\phi}^n} \cdot (\bm{\phi}^n-\bm{\phi}^{n-1}) \leq 0 $ or that $\alpha^n\approx \frac{1}{L}$
 , the inequation can be further simplified to:
\begin{align}
    \begin{split}
{\beta^n}\leq \sqrt{\left\vert {\alpha^n}^2-\frac{2\alpha^n}{L}\right\vert} \frac{ \left\Vert\dd {\mathcal{C}(\bm{\phi}^n)}{\bm{\phi}^n}\right\Vert }{\left\Vert (\bm{\phi}^n-\bm{\phi}^{n-1}) \right\Vert}.
    \label{eq::constraint_first_simplification }
    \end{split}
\end{align}
These bounds only guarantee convergence when using $\alpha^n=\gamma^n$. Note here that $\gamma^n$ is defined using the variation of an estimation of the Lagrangian gradient, which is evaluated after the projection. In practice, $\gamma^n \neq \alpha^n$ in most cases. However, they are generally relatively close since the gradient of the Lagrangian is generally mostly controlled by the gradient of the cost function. Using these bounds along with a good estimate of $L$ enables the definition of the different step sizes ($\alpha^n$, $\gamma^n$, $\beta^n$). A generally good local estimate of $L$ is\cite{boyd2004convex}: 
\begin{align}
    L \approx \frac{\Vert\dd {\mathcal{C}(\bm{\phi}^n)}{\bm{\phi}^n} - \dd {\mathcal{C}(\bm{\phi}^{n-1})}{\bm{\phi}^{n-1}}\Vert}{\Vert(\bm{\phi}^n - \bm{\phi}^{n-1})\Vert}.
\end{align}
From this estimate, and by maximizing the gradient descent portion of the cost function, $\alpha^n$ is defined as: 
\begin{align}
    \alpha^n = \frac{1}{L}=\frac{\Vert(\bm{\phi}^n - \bm{\phi}^{n-1})\Vert}{\Vert\dd {\mathcal{C}(\bm{\phi}^n)}{\bm{\phi}^n} - \dd {\mathcal{C}(\bm{\phi}^{n-1})}{\bm{\phi}^{n-1}}\Vert}.
    \label{eq::alpha_def}
\end{align}
From this definition, to be consistent with our hypothesis that $\gamma^n=\alpha^n$ in the absence of constraint, $\gamma^n$ is defined as:
\begin{align}
    \gamma^n = \frac{\Vert(\bm{\phi}^n - \bm{\phi}^{n-1})\Vert}{\Vert\dd {\mathcal{L}(\bm{\phi}^n)}{\bm{\phi}^n} - \dd {\mathcal{L}(\bm{\phi}^{n-1})}{\bm{\phi}^{n-1}}\Vert}.
\end{align}
Finally, using the definition of $\alpha^n$, $\beta^n$ is defined as:
\begin{align}
{\beta^n}=\hat{\beta}\alpha^n \frac{ \left\Vert\dd {\mathcal{C}(\bm{\phi}^n)}{\bm{\phi}^n}\right\Vert }{\left\Vert (\bm{\phi}^n-\bm{\phi}^{n-1}) \right\Vert},
\end{align}
where $\hat{\beta}$ is a user-defined parameter between 0 and 1. A $\hat{\beta}$ value around 0.2 gives adequate results.

Note that in practice, the choice of Lipchsitz constant does not guarantee convergence, as it is only a local approximation that can carry a non-negligible error. Some alternatives, such as the approach proposed by Nishioka \textit{et al.} \cite{nishioka2023inertial}, can be used to guarantee convergence by increasing the estimation of the Lipchsitz constant when an increase in the cost function is observed. As such, a simple strategy would be to repeat the iteration if it leads to an increase in the cost function using a multiple of the current Lipchsitz constant estimate, effectively reducing the step size. However, in practice, this was not required for the case presented in the present document. 
Additionally, in the absence of previous step information to evaluate the Lipchsitz constant, a small conservative step is chosen based on the univariate bounds ($d \leq \phi_i \leq e$) and the $L_\infty$ norm of the gradient:
\begin{align}
    \alpha^n = \frac{0.1 (e-d)}{\left\Vert \dd {\mathcal{C}(\bm{\phi}^n)}{\bm{\phi}^n}\right\Vert_\infty}.
    \label{eq::alpha_def_new}
\end{align}
In such cases, $ \gamma^n= \alpha^n$ and $\beta^n=0$.

\subsubsection{Implementation}
An implementation of the algorithm described in the previous sections can be found in the supplementary material of the presented document. This implementation is realized in Python and relies on the NumPy library \cite{harris2020array}. This implementation aims to give the reader a better understanding of the algorithm part of the proposed method. A small implementation for a general convex minimization problem with randomly defined linear equality and inequality constraints accompanies it. 
\bmsection{Examples of Applications}
\label{sec::application}

Two 3D heat conduction topology optimization problems based on heatsink design are utilized to illustrate the capabilities of the PGD algorithm. This section outlines the specific optimization problem and details the solver employed for the topology optimization. Additionally, comparisons with other algorithms are included, demonstrating the performance differences for the presented test case.

\subsection{Topology optimization framework}

As the introduction mentions, the topology optimization algorithm is implemented in DFEMwork \cite{audet2008dfemwork}, a proprietary finite element solver of the National Research Council of Canada. The optimizer described here is implemented in the same numerical architecture, avoiding costly communication and file-writing processes. For more detail on the solver and the implementation of the adjoint problem solved to obtain the gradient of the cost function, we refer the reader to \cite{navahThermofluidTopologyOptimization2024,lamarche2024additively}.

\subsection{Case description}
The problem under consideration is the stationary heat equation with variable thermal conductivity $\kappa$, viz.
\begin{align}
    \nabla \cdot \left(\kappa \nabla T\right) = S,
    \label{eq::heat_eq_pde}
\end{align}
which is solved for the temperature field $T$. The optimization problem involves determining the optimal distribution of conductive and insulation materials of respective thermal conductivity $\kcond=1$ and $\kins=0.001$, which minimizes the average temperature in the domain $\Omega$. The problem is relaxed through the use of a continuous pseudo density field, $\Bar{\phi}$, which varies from 0 to 1 and corresponds to the volume fraction of the conductive material. This translates into the following cost function: 
\begin{align}
    \mathcal{C}(\Bar{\phi}, T(\Bar{\phi})) = \frac{\int_\Omega T(\Bar{\phi}) d\Omega}{\int_\Omega  d\Omega}.
    \label{eq::cost_function}
\end{align}

The volume fraction results from a three-field projection, $\phi \to \hat{\phi} \to \Bar{\phi}$, where the design variable $\phi$ is first filtered through the solving of a Helmholtz-type equation \cite{lazarovFiltersTopologyOptimization2011} and then projected using a smoothed Heaviside function \cite{wang2011projection}:
\begin{align}
    H(x)=\frac{\tanh(\lambda (x-0.5))+\tanh(\lambda/2) }{2\tanh(\lambda/2)}.
    \label{eq::heaviside}
\end{align}
The resulting volume fraction is then used to determine the material properties using a SIMP \cite{bendsoe1988generating} interpolation scheme, $\kappa = \kins + (\kcond - \kins)\Bar{\phi}^b$, where $b$ is a penalty parameter. This combination of filter and projections penalizes intermediate values of the volume fraction while preventing checkerboard-like material distributions. Here, the filter radius used in the Helmholtz equation is three times the characteristic length of the elements. The sensitivity of the cost function relative to the design variable is evaluated using the adjoint method as described in \cite{lamarche-gagnon2021comparative}. 

The minimization problem is subject to a constraint on the maximal volume of conductive material in the domain, $a_0$:
\begin{align}
    f_0 - a_0  = \frac{\int_\Omega \bar{\phi} d\Omega}{\int_\Omega  d\Omega}-a_0\leq 0.
    \label{eq::material_constraint}
\end{align}

To test the capacity of the optimizer to handle nonlinear constraints, we also consider an overhang-limitation constraint \cite{qian2017undercut}. This constraint was developed to improve the manufacturability of parts intended to be fabricated by additive manufacturing technologies. The constraint limits the angle of features relative to the given build direction, resulting in self-supporting designs. In this work, the constraint is evaluated using the intermediate variable $\hat{\phi}$ (as in \cite{lamarche2024additively}) and is given by:
\begin{align}
     f_1 - a_1 = \frac{\int_\Omega H'\left(\frac{\nabla \hat{\phi} \cdot \bm{n}}{\left\Vert \nabla \hat{\phi} \right\Vert }- \cos(\theta_0)\right )\nabla \hat{\phi} \cdot \bm{n} d\Omega}{\int_\Omega  d\Omega}-a_1\leq 0,
    \label{eq::overhang_constraint}
\end{align}
where $H'(x)=\frac{1}{1+e^{-20 x}}$ is a smoothed Heaviside function, $\theta_0$ is the maximal angle relative to the build direction, and $a_1$ is a user-defined limit on the overhang indicator. In all cases, $a_1$ is set to approximately 10\% of the overhang indicator value of the optimal solution obtained without this constraint, and the angle is set to $\theta_0=\frac{\pi}{4}$.

The heat conduction problem is solved in a rectangular cuboid domain, shown schematically in Figure \ref{fig::heatsink_setup}, with a uniform volumetric heat source ($S=1$). The temperature of a small portion of the bottom surface is set to $T=0$, while the remaining surfaces are adiabatic. Equations \eqref{eq::heat_eq_pde} to \eqref{eq::overhang_constraint} are discretized using the finite element method and the computational mesh is generated using the  Gmsh software \cite{gmsh}, with a characteristic length of 0.02 for a total of 26,601 degrees of freedom (143,115 elements). 

\begin{figure}[!htpb]
	\centering
	\includegraphics[width=0.3\textwidth]{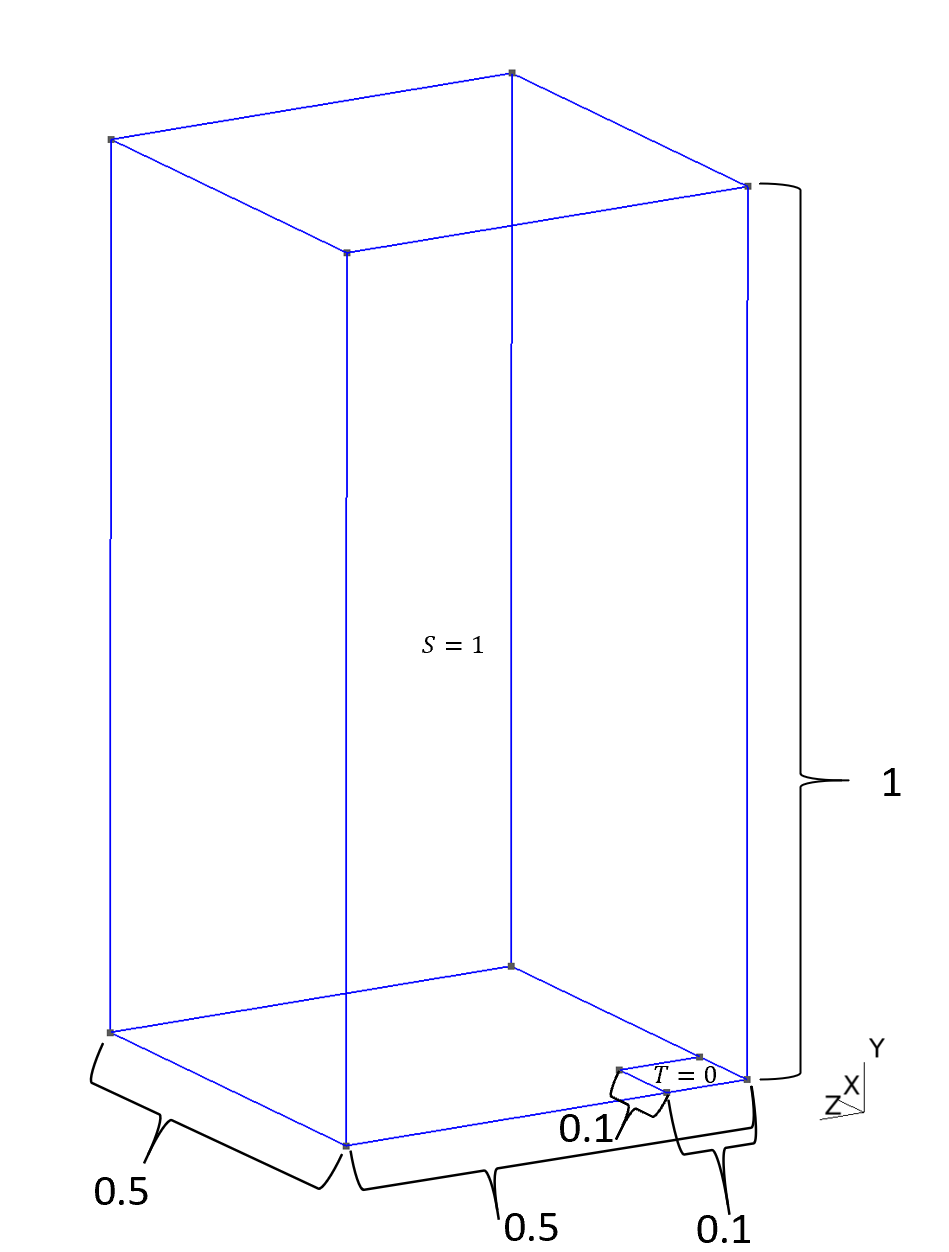}
	\caption{Representation of the computational domain of the 3D heatsink case. }
\label{fig::heatsink_setup}

\end{figure}

For all cases, except if mentioned otherwise, the tolerance on the constraints used ($\epsilon_i$, cf. Section \ref{sec::consideration_kkt}) is set to 2\,\% of the constraint constant, which here corresponds to $\epsilon_0=0.002$ for the volume fraction constraint \eqref{eq::material_constraint}, and $\epsilon_1=0.000722$ for the overhang constraint \eqref{eq::overhang_constraint}.

\subsection{Optimization results}
We separate the optimization cases in two, \ie with and without the overhang constraint. These cases demonstrate the algorithm's capacity to optimize even in the presence of nonlinear constraints. The optimization problem is solved using a continuation approach for the SIMP penalty and Heaviside projection parameters, $b$ and $\lambda$, respectively. A maximum of 50 optimization cycles are performed for each pair of values of these parameters. If the relative variation of the cost function between two cycles is smaller than $10^{-6}$, then the solver goes to the next pair of $(b, \lambda)$. The hyper-parameters values are given in Table \ref{table::topoopt_beta_b}.
\begin{table}
\begin{center}
 \begin{tabular}{|c| c| c|c| c| c|c|c|c|} 
 \hline
   Optimization loop & 1 &2 & 3& 4& 5 & 6 & 7 & 8 \\ 
 \hline
 $b$ &	1 & 2 &	3	& 3& 3 & 3&3 &3  \\
 \hline 
 $\lambda$&	1 & 2 &	4	& 8 & 16  & 32  & 64  & 128 \\ 
 \hline
\end{tabular}
\end{center}
\caption{SIMP and Heaviside projection penalty parameters at a given optimization loop.}
\label{table::topoopt_beta_b}
\end{table}

\subsubsection{Results without the overhang constraint}
For comparison, the PGD algorithm is compared to the results obtained using i) the Method of Moving Asymptote (MMA), as described by Svanberg \cite{svanberg1987method} and using an asymptote move limit of 0.1\footnote{This value was found to provide the best convergence properties for the problem being addressed. Larger values lead to oscillation, and smaller values lead to a slower convergence.} and ii) the GCMMA algorithm implemented in the NLopt library \cite{NLopt} (using the solver named LD\_MMA). Additionally, the current PGD algorithm with its default parameters $( \mu=0.95, \: \hat{\beta}=0.2)$ is compared to a ``traditional'' PGD implementation, \ie without: 1) the inertia consideration ($\hat{\beta}=0$), 2) the relaxation for broken constraint ($\mu=1$), and 3) the step size modification based on the variation of the Lagrangian ($\frac{\gamma}{\alpha}=1$). The results are also compared with a version of the algorithm with inertia and relaxation for broken constraints, but without the modification of the step based on the variation of the Lagrangian $(\frac{\gamma}{\alpha}=1, \:  \mu=0.95, \: \hat{\beta}=0.2)$; this version is referred to as the ``intermediary'' implementation of PGD.

The results, shown in Figures \ref{fig::cost_no_overhang} and \ref{fig::sf_no_overhang}, show that, for this case, the current PGD algorithm performs similarly to all the other methods tested. Small improvements relative to the MMA algorithm can be observed, both in terms of the final cost function value and convergence speed. Regarding the different PGD implementations, the use of the $\gamma$ factor did not significantly improve the solution compared to the intermediary version. However, the traditional implementation converges to the worst local minima (2\,\% higher than MMA). This highlights the effect of the inertial step, enabling the solver to avoid converging to local minima to a certain extent. The NLopt results present a bit more oscillation and slower convergence than the other approaches in this case, along with a 4\,\% increase of the final cost function value relative to the MMA approach.

For all algorithms, the volume fraction inequality constraint is satisfied without significant overshoots, except when the optimization parameters are updated, which is explained by the non-linearity brought by the smoothed Heaviside projection (Equation \eqref{eq::heaviside}). The final design obtained for the MMA and PGD (default) algorithms is presented in Figure \ref{fig::design_comparaison}. The figure shows noticeable differences between the two designs, indicating that the algorithms converged to two local minima. However, the general branching pattern and the size of the branches remain similar in both cases.

\begin{figure}[!htpb]
	\centering
	\includegraphics[width=1\textwidth]{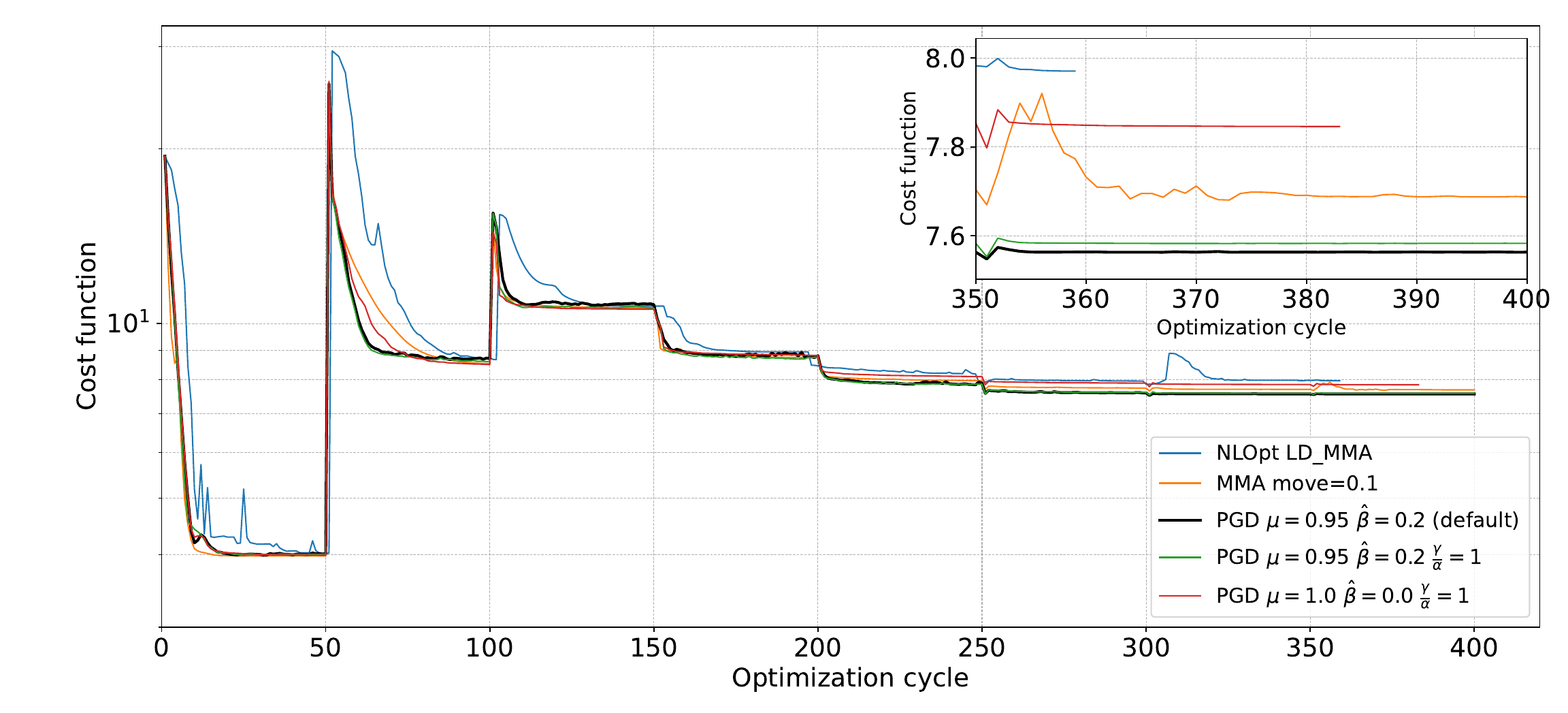}
	\caption{Evolutions of the cost function for the different optimization algorithms (without the overhang constraint).}
\label{fig::cost_no_overhang}
\end{figure}

\begin{figure}[!htpb]
	\centering
	\includegraphics[width=0.933333\textwidth]{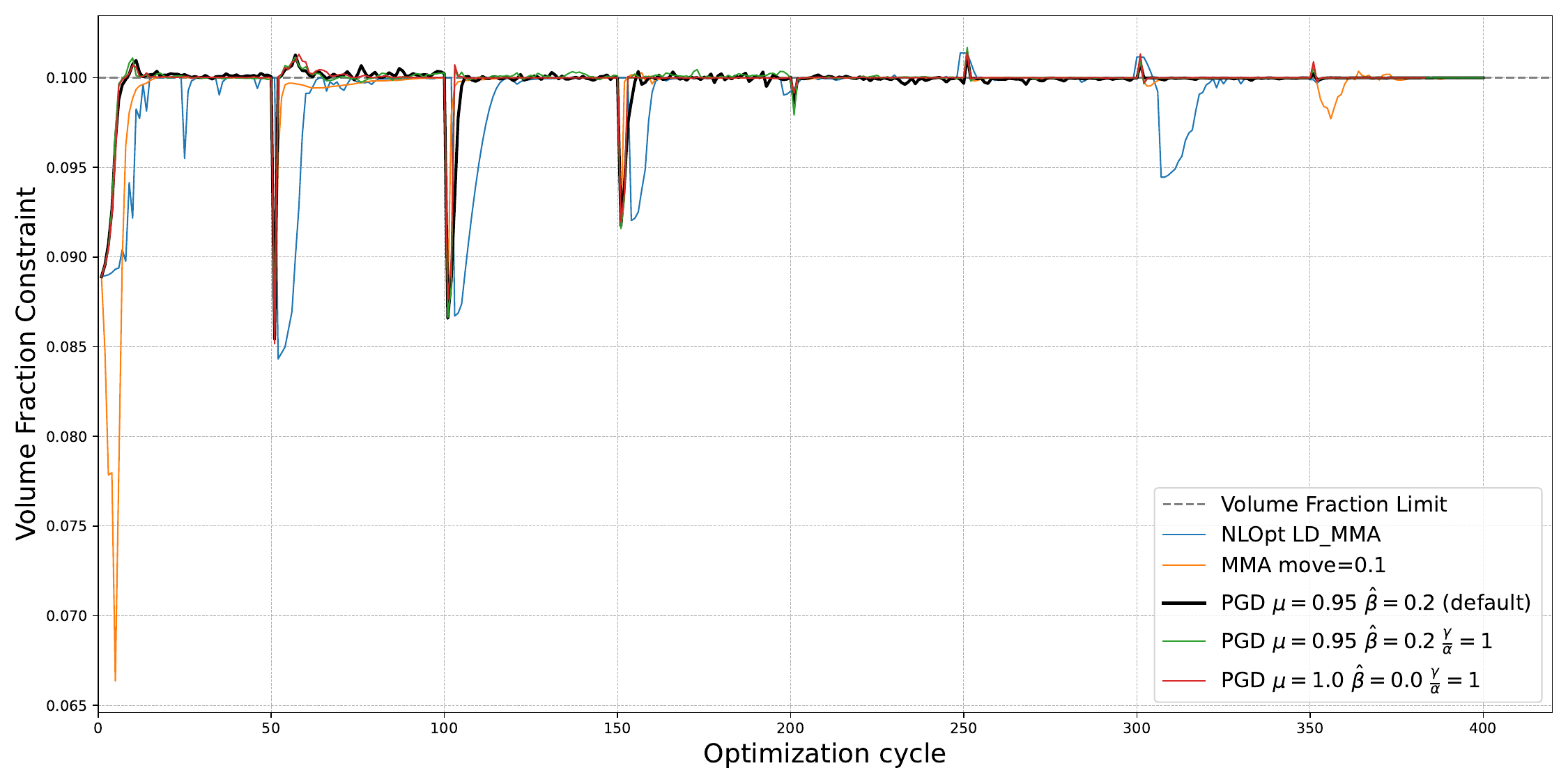}
	\caption{Evolutions of the volume fraction constraint for the different optimization algorithms (without the overhang constraint).}
\label{fig::sf_no_overhang}
\end{figure}

\begin{figure}[!htpb]
	\centering
    \begin{subfigure}{0.5\textwidth}
      \centering
    	\includegraphics[width=1\textwidth]{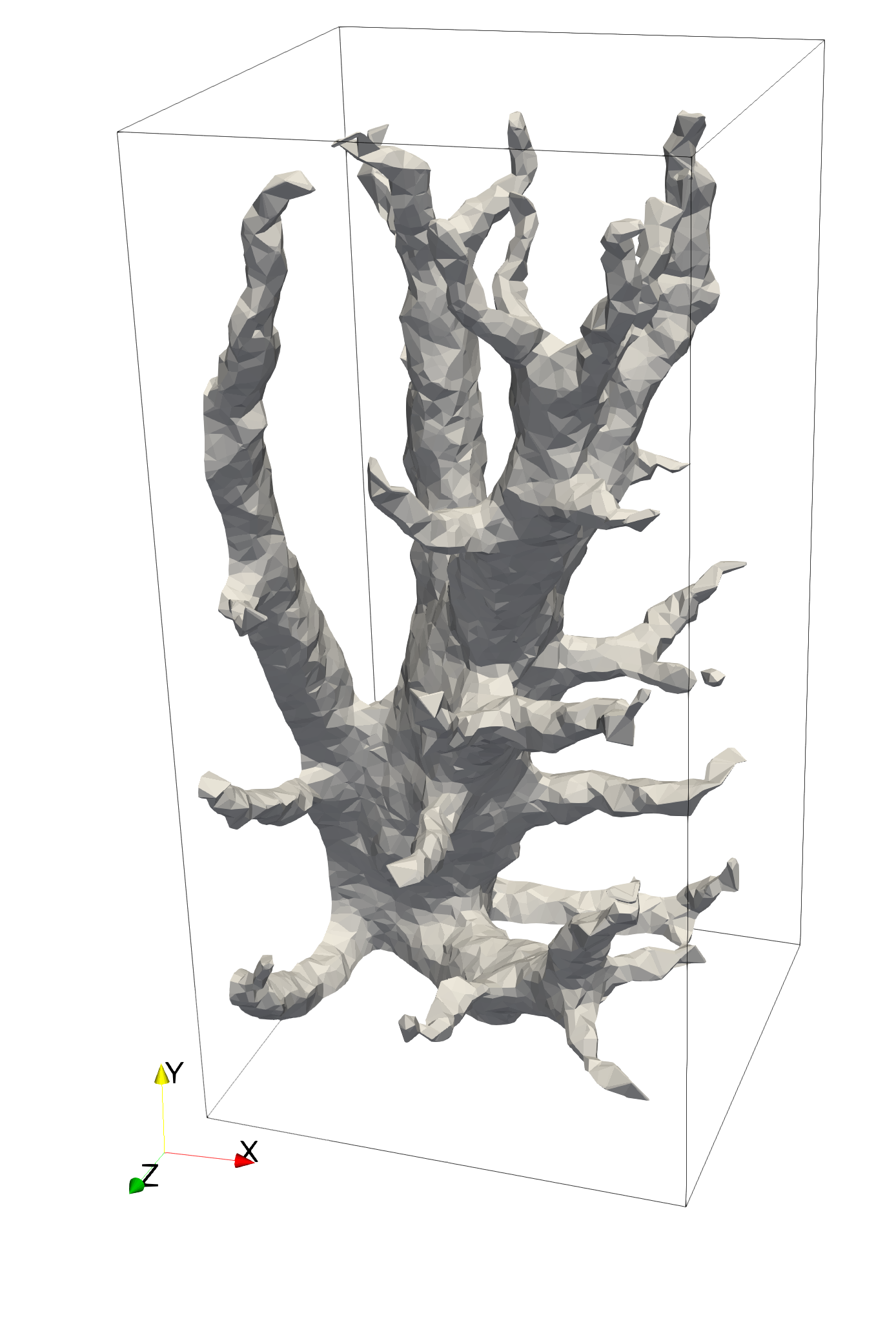}
     \caption{} 
    \end{subfigure}%
    \begin{subfigure}{0.5\textwidth}
      \centering
    	\includegraphics[width=1\textwidth]{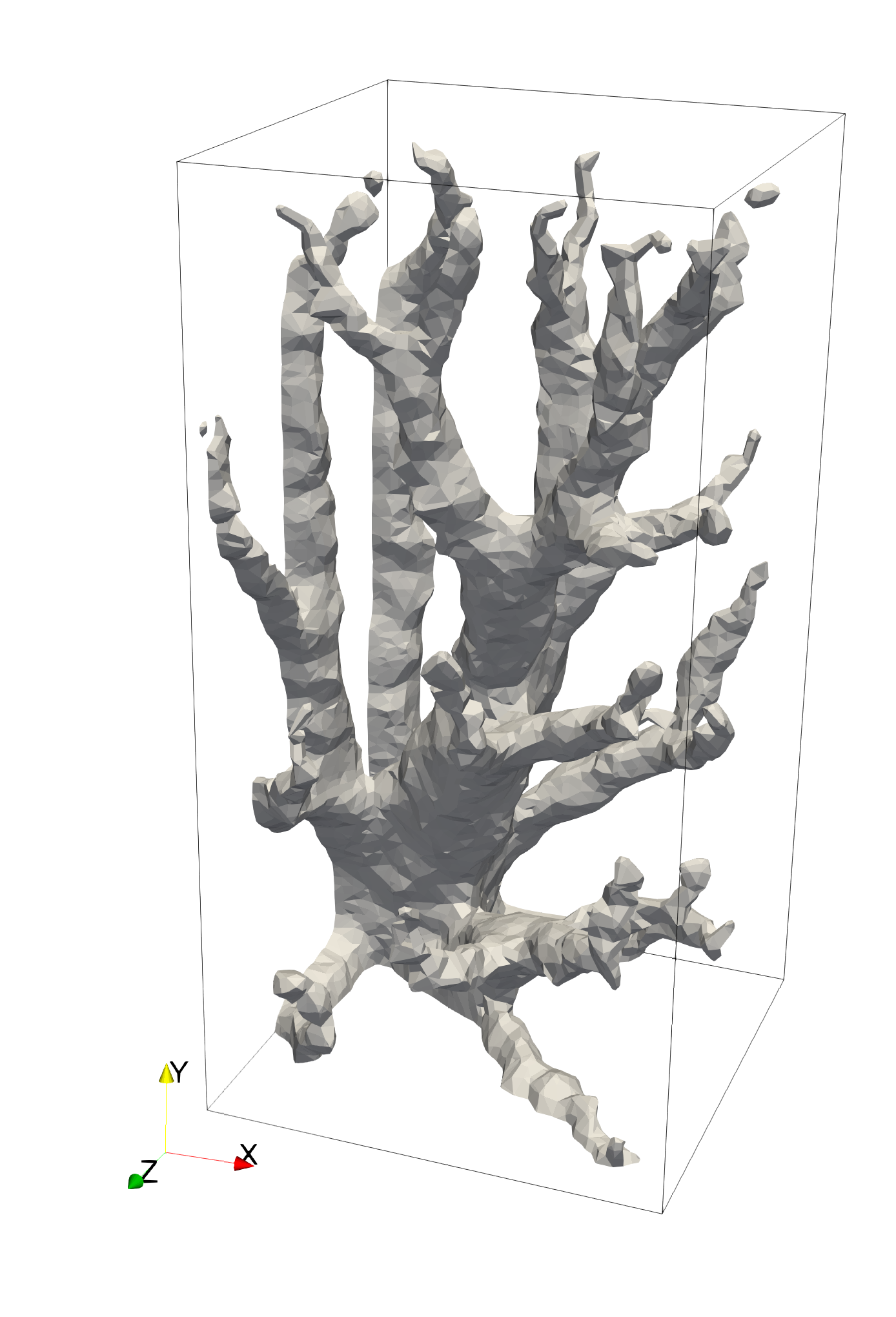}
     \caption{} 
    \end{subfigure}
\caption{Comparison of the final design obtained using the (a) MMA and (b) PGD (proposed with default parameters) algorithms, as depicted by the isosurface $\hat{\phi} = 0.5$ (without the overhang constraint). \label{fig::design_comparaison}}
\end{figure}

\subsubsection{Results with the overhang constraint}
The overhang constraint (Equation \eqref{eq::overhang_constraint}) is added for this case. This constraint is highly nonlinear and, as such, can be hard to maintain for a small value of $a_1$. The same three variants of the PGD algorithm are again compared to the MMA method, although this time using a move limit of 0.05 (a larger move resulted in a larger final cost function and a slower convergence). NLopt optimization results are not shown for this case due to their highly oscillating nature.  The results are presented in Figures \ref{fig::overhang_cost}, \ref{fig::overhang_sf} and \ref{fig::overhang_overhang}. Figure \ref{fig::overhang_cost} first shows that all three PGD algorithms converge substantially faster than MMA in the second loop (cycles 51 to 100). The faster convergence is also associated with a breaking of the overhang constraint (Figure \ref{fig::overhang_overhang}) at the beginning of the loop, which is then gradually recovered. Note that the inclusion of the $\gamma$ term accelerates the recovery of the constraint after it is broken. In the third loop (cycles 101 to 150), the traditional PGD algorithm exhibits the fastest convergence and achieves the lowest cost function value. However, the faster convergence is accompanied by a considerable overshoot of the overhang constraint, which is violated for the whole third loop. This demonstrates that adding the relaxation parameter, $\mu$, for constraint breaking enables the PGD algorithm to converge without significantly violating the constraints. The MMA algorithm also presents a significant overhang constraint breaking for the first half of the third loop, which is then quickly recovered. In the remaining loops, the proposed PGD algorithm with default parameters converges to slightly lower cost function values than all other methods. Adding $\gamma$ helps slightly to converge faster to a lower cost in the presence of highly nonlinear constraints but also leads to some oscillation under the volume fraction limit due to a slightly more aggressive step. These results highlight how including $\gamma$ helps consider the effect of non-linear constraint in the step size and accelerate the convergence while reducing constraint breakage. 
Similar to the case without the overhang constraint, the volume constraint is well respected for all of the algorithms, with the exception of slight overshoots for the PDG algorithms in loops 1 and 2. The final design obtained for the MMA and the proposed PGD algorithm using the default parameter is presented in Figure \ref{fig::design_comparaison_with_overhang}. As for the previous case, the designs produced by both algorithms are different, suggesting that each algorithm has converged to a distinct local minimum but of comparable cooling efficiency.

\begin{figure}[!htpb]
	\centering
	\includegraphics[width=1\textwidth]{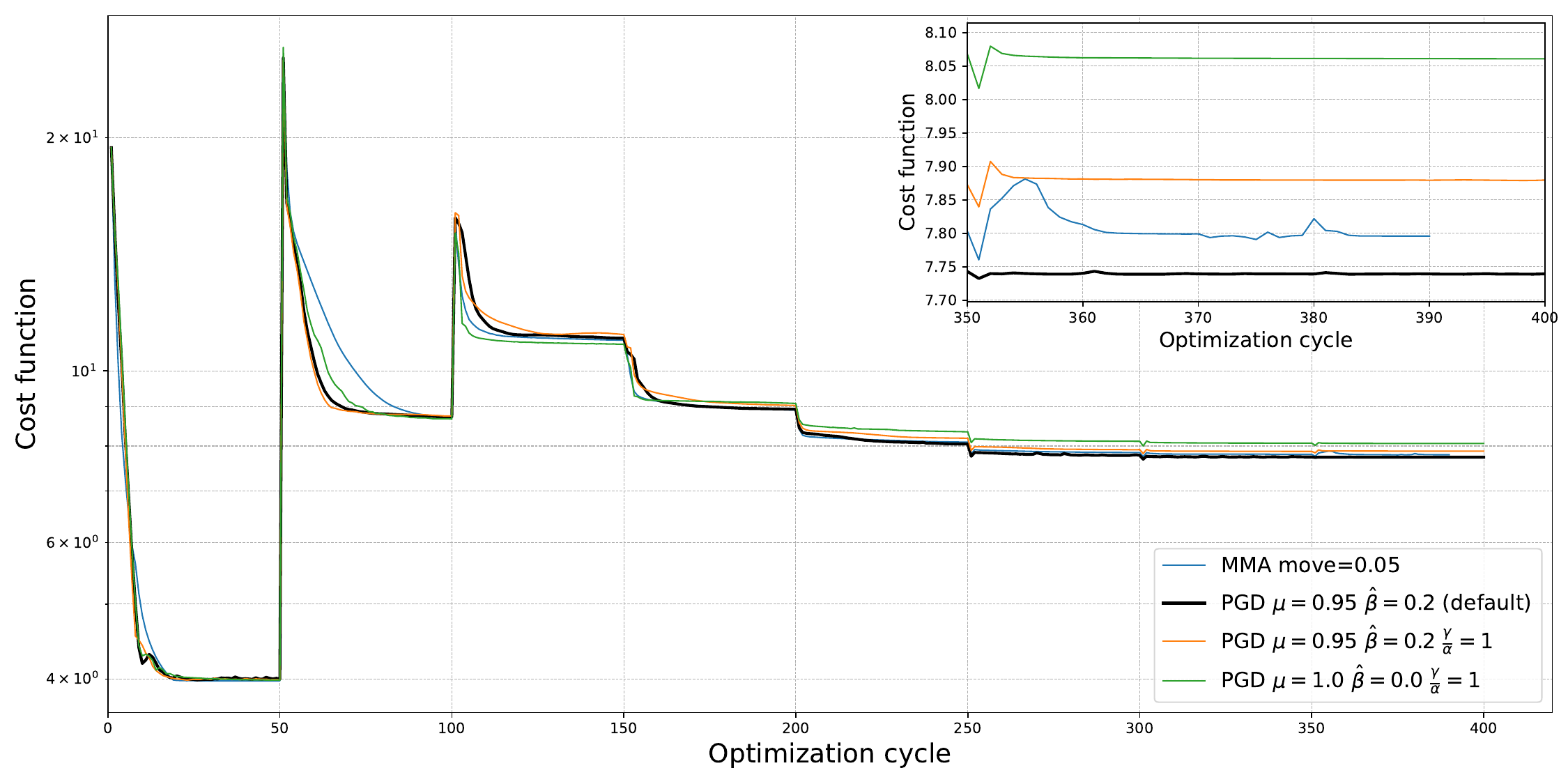}
	\caption{Comparison of the cost function evolution for different optimization algorithms (with the overhang constraint).}
\label{fig::overhang_cost}
\end{figure}

\begin{figure}[!htpb]
	\centering
	\includegraphics[width=1\textwidth]{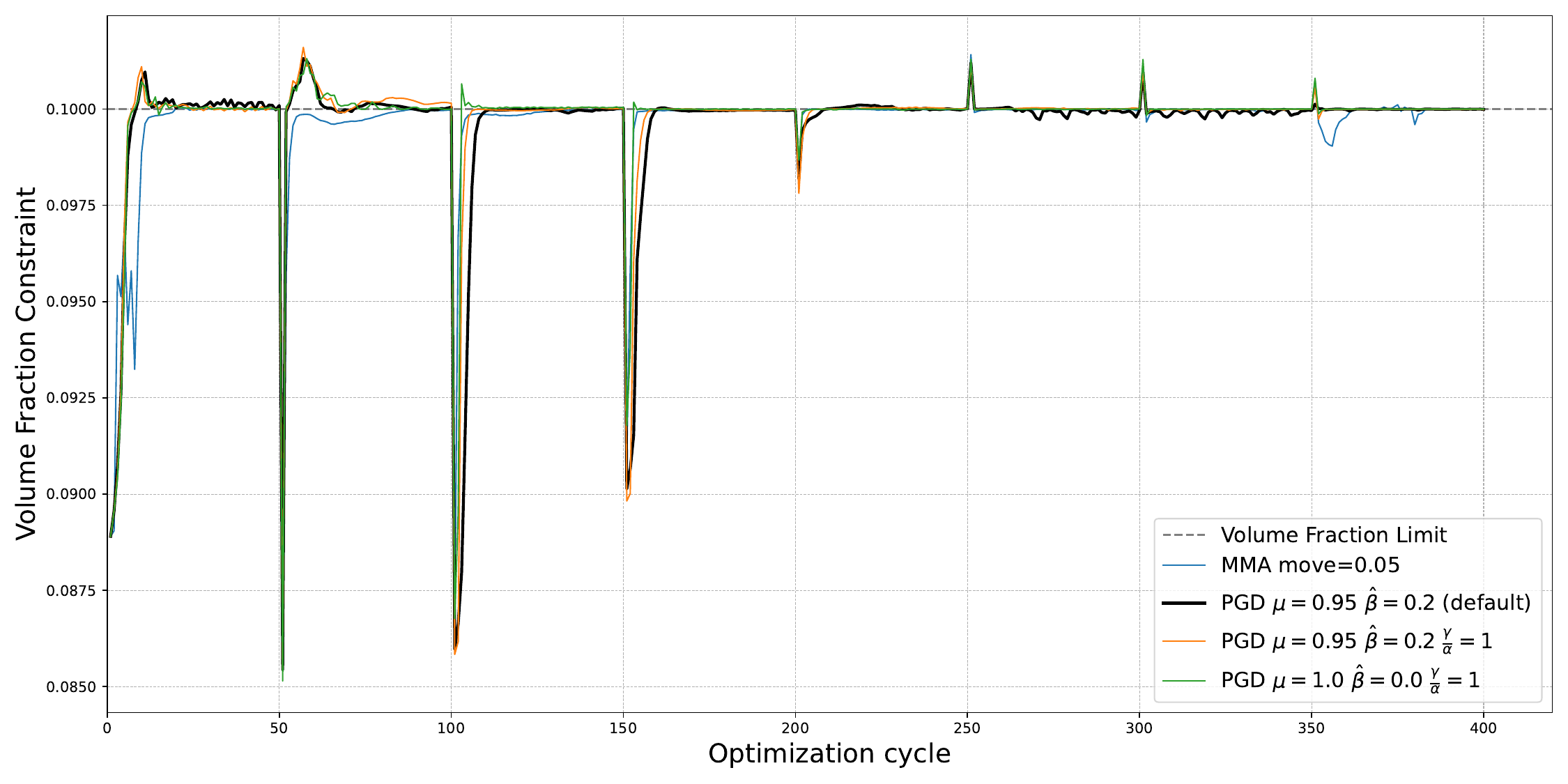}
	\caption{Comparison of the volume fraction constraint evolution for different optimization algorithms (with the overhang constraint).}
\label{fig::overhang_sf}
\end{figure}
\begin{figure}[!htpb]
	\centering
	\includegraphics[width=1\textwidth]{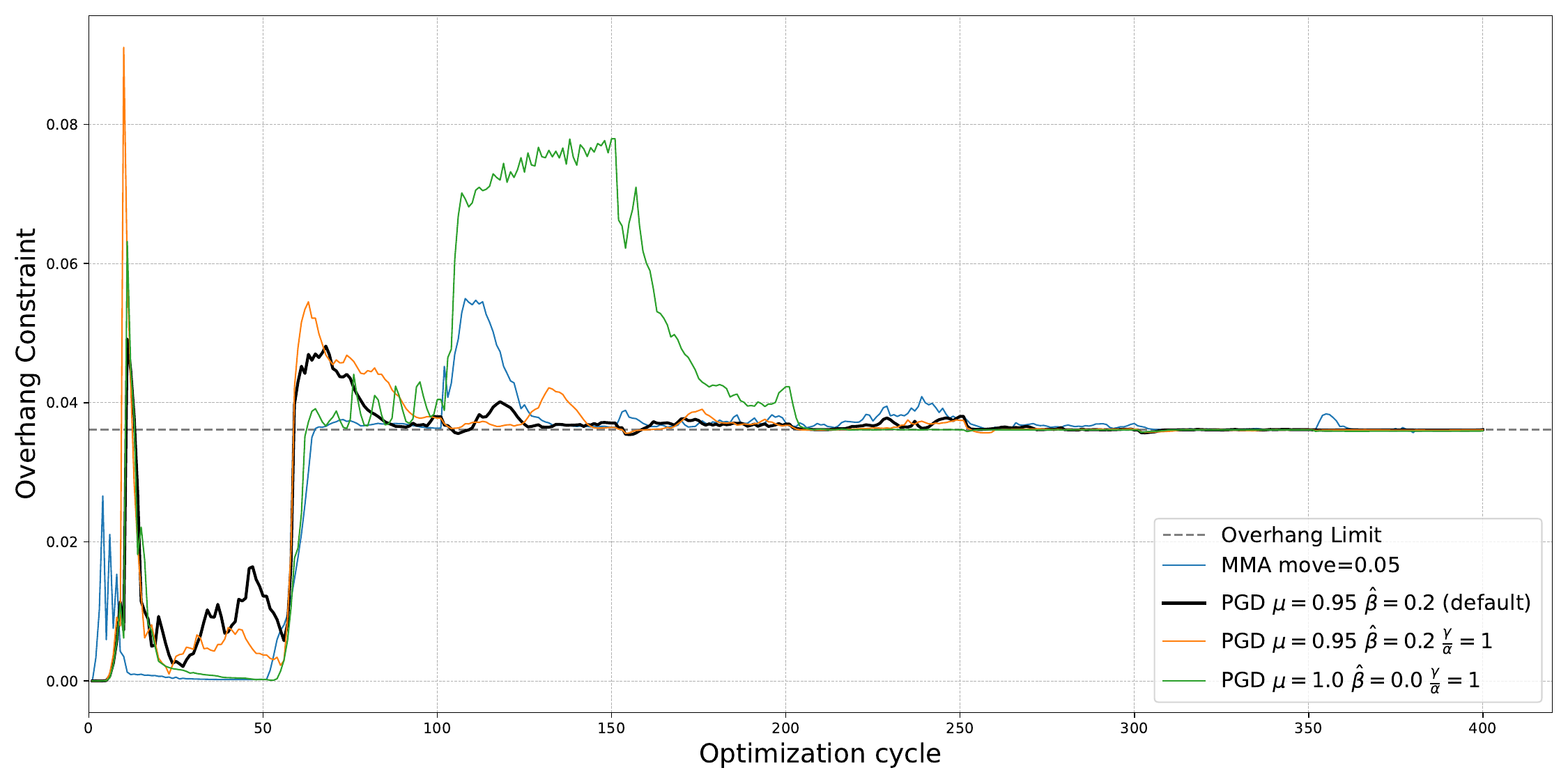}
	\caption{Comparison of the overhang constraint evolution for the different optimization algorithms.}
\label{fig::overhang_overhang}
\end{figure}

\begin{figure}[!htpb]
	\centering
    \begin{subfigure}{0.5\textwidth}
      \centering
    	\includegraphics[width=1\textwidth]{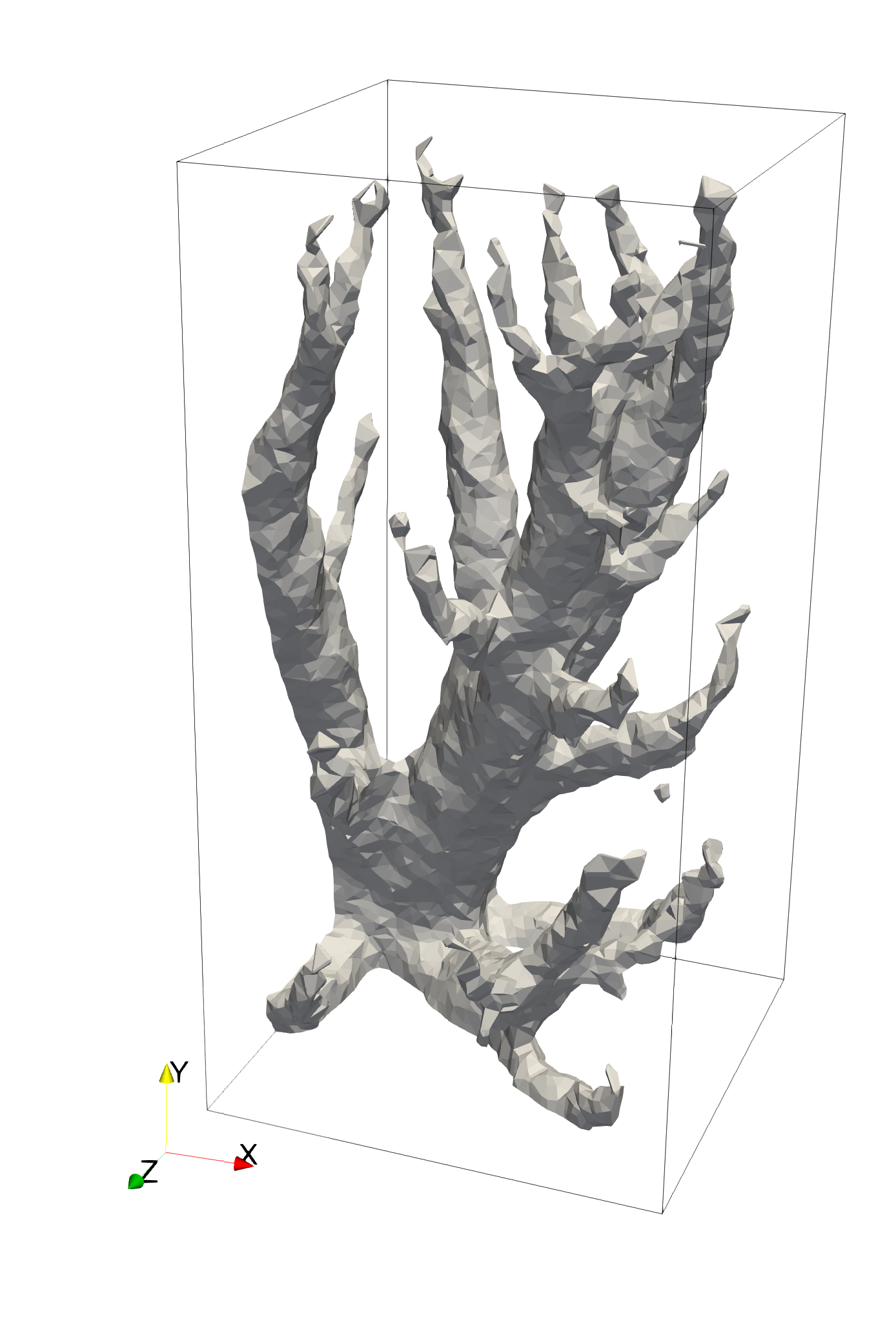}
     \caption{} 
    \end{subfigure}%
    \begin{subfigure}{0.5\textwidth}
      \centering
    	\includegraphics[width=1\textwidth]{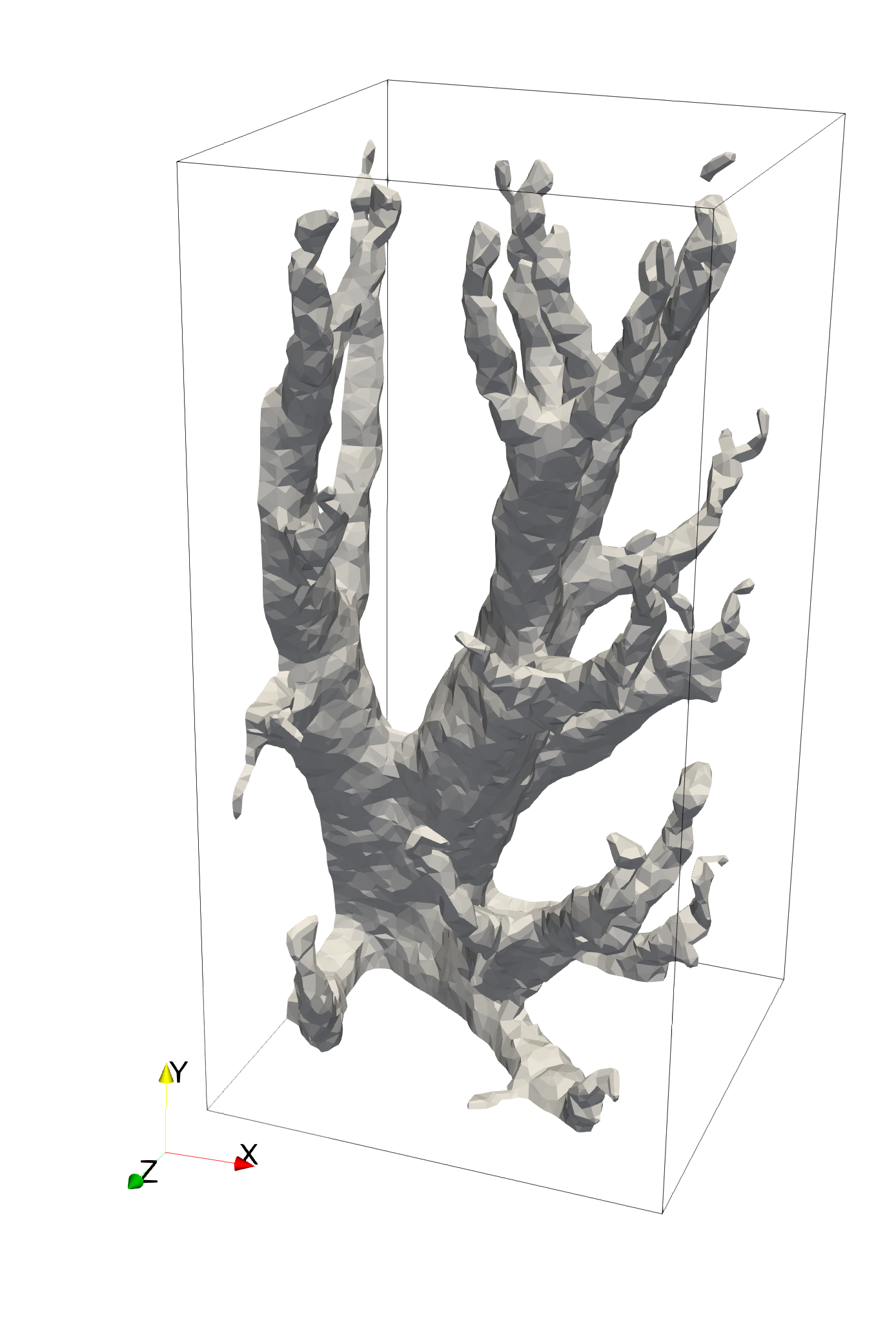}
     \caption{} 
    \end{subfigure}
\caption{Comparison of the final design obtained using the (a) MMA and (b) PGD (proposed, with default parameter) algorithms, as depicted by the isosurface $\hat{\phi} = 0.5$ (with the overhang constraint). \label{fig::design_comparaison_with_overhang}} 
\end{figure}

\subsection{Effect of the PGD parameters}
The proposed PGD algorithm has a few parameters that are available to the user. The main parameters the user has to define are the inertia parameter $\hat{\beta}$, the broken constraint relaxation $\mu$, and the tolerance $\epsilon_i$ on each constraint. This section will highlight the effect of these parameters on the optimizer's convergence using the case with the overhang constraint.

\subsubsection{Effect of the constraints tolerance, $\epsilon_i$}
The tolerance on each constraint determines when a constraint is considered broken and, as such, when the exponent on $\mu$ must be increased or decreased. We recall that the proposed tolerance was $\epsilon_i = 0.02 a_i$ (\ie $a_0=0.1$ and $a_1=0.0361$). Optimization results  using $\frac{\epsilon_i}{a_i}=0.01$, $\frac{\epsilon_i}{a_i}=0.001$, $\frac{\epsilon_i}{a_i}=0.0001$ are presented in Figures \ref{fig::overhang_cost_tol_effect} and \ref{fig::overhang_overhang_tol_effect}. Note that, in this and subsequent sections, as the volume fraction constraint is well respected for all cases, the evolution of this constraint is not presented. The results clearly show that as the tolerance of the constraints is tightened, the cycles are significantly relaxed, leading to a slower optimization and slightly larger cost function values. However, as the tolerance is lowered, the overshoots on the constraints are significantly reduced. Only changes in optimization parameters (see Table \ref{table::topoopt_beta_b}) cause the constraints to go well above their respective tolerances. It is also possible to observe that the optimization process keeps the constraint value closer to the limit as the tolerance is reduced. This means that the tolerances enable the user to control the acceptable constraint error at the cost of the convergence speed of the algorithm. In the presence of highly nonlinear constraints such as the overhang constraint, it is suggested to use a larger tolerance if possible to favor the minimization process and avoid over-relaxing.

\begin{figure}[!htpb]
	\centering
	\includegraphics[width=1\textwidth]{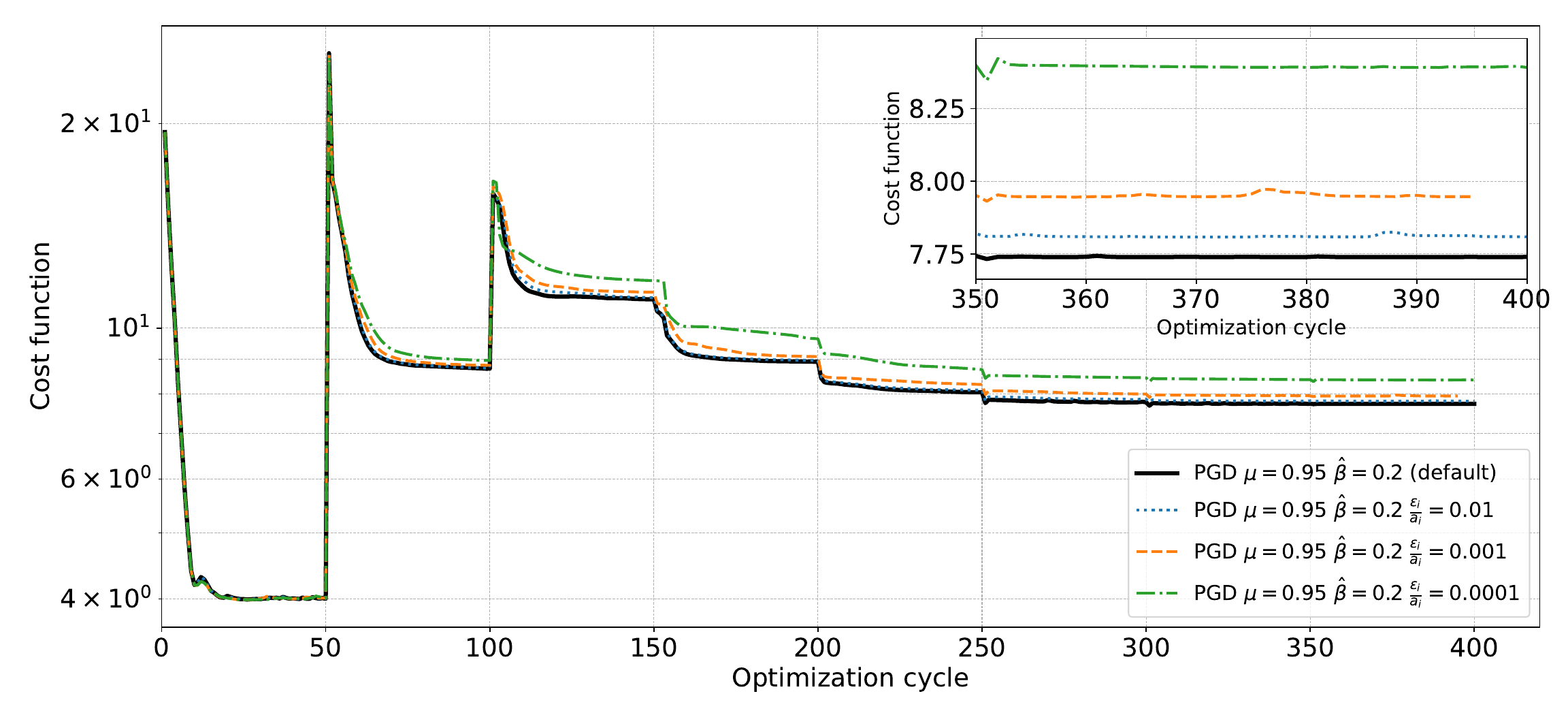}
	\caption{Comparison of the cost function evolution for different values of the relative tolerance $\frac{\epsilon_i}{a_i}$ on the constraints.}
\label{fig::overhang_cost_tol_effect}
\end{figure}

\begin{figure}[!htpb]
	\centering
	\includegraphics[width=1\textwidth]{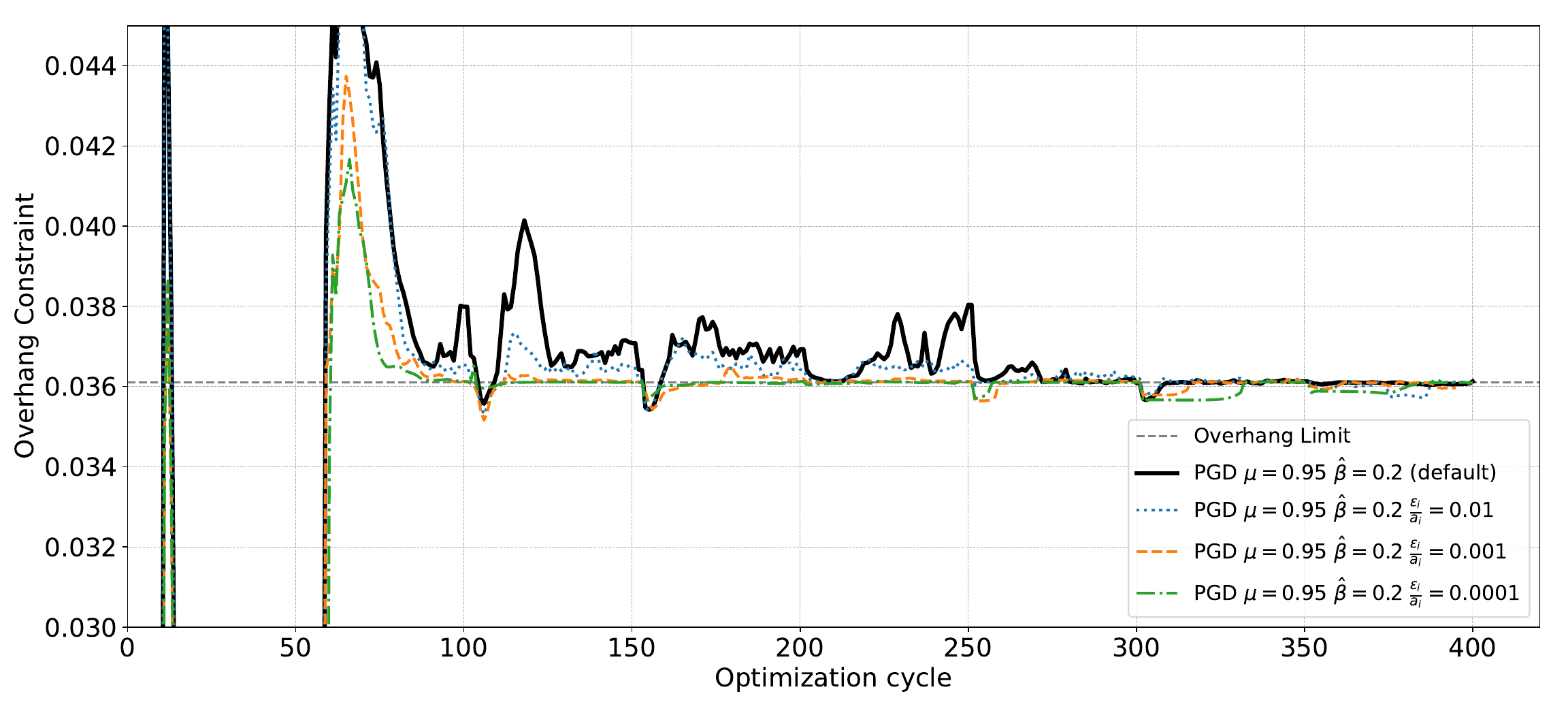}
	\caption{Comparison of the overhang constraint for different values of the relative tolerance $\frac{\epsilon_i}{a_i}$ on the constraint.}
\label{fig::overhang_overhang_tol_effect}
\end{figure}

\subsubsection{Effect the broken constraint relaxation factor, $\mu$}
The broken constraint relaxation factor (cf. Equation \eqref{eq::update_with_mu}) determines how fast the relaxation will react to a broken constraint. The lower $\mu$, the more the optimization process will try to aggressively correct the constraint without minimizing the cost function when any constraint is defined as broken (value above the tolerance). To show how this parameter affects the evolution of the optimization process, Figures \ref{fig::overhang_cost_BCR_effect} and \ref{fig::overhang_overhang_BCR_effect} present the evolution of the cost function and the overhang constraint for several $\mu$. From these results, two elements are highlighted. First, a lower value of $\mu$ leads to a faster correction of the overhang constraint when broken due to the change in optimization hyperparameters. However, for lower $\mu$, more oscillations of the constraint value are observed. In some cases, the relaxation factor increases too fast, leading to new constraint breakage (see the curves for $\mu=0.6$ and $\mu=0.8$). This is especially evident in the fourth loop (iteration 151 to 200). Despite the observed differences in the first few loops,  the $\mu$ parameter has little impact on the optimization process's final cost function and constraint values. This can be explained as large changes in the material distribution happening in the first few loops, causing more constraint breakage; however, as the topology of the design settles down, there is almost no constraint breakage, implying that the effect of $\mu$ becomes negligible. Interestingly, the variation of the final cost function value with respect to $\mu$ is not monotonous. There was no clear explanation for this phenomenon other than the small perturbation caused by the difference of $\mu$ during the optimization process, which caused the optimizer to converge to different local minima.
Based on these observations, this parameter should be configured to prevent oscillation caused by a rapid increase in relaxation after all constraints are satisfied while still ensuring a quick response when constraints are violated. In these tests, values between 0.9 and 0.95 yielded optimal results.

\begin{figure}[!htpb]
	\centering
	\includegraphics[width=1\textwidth]{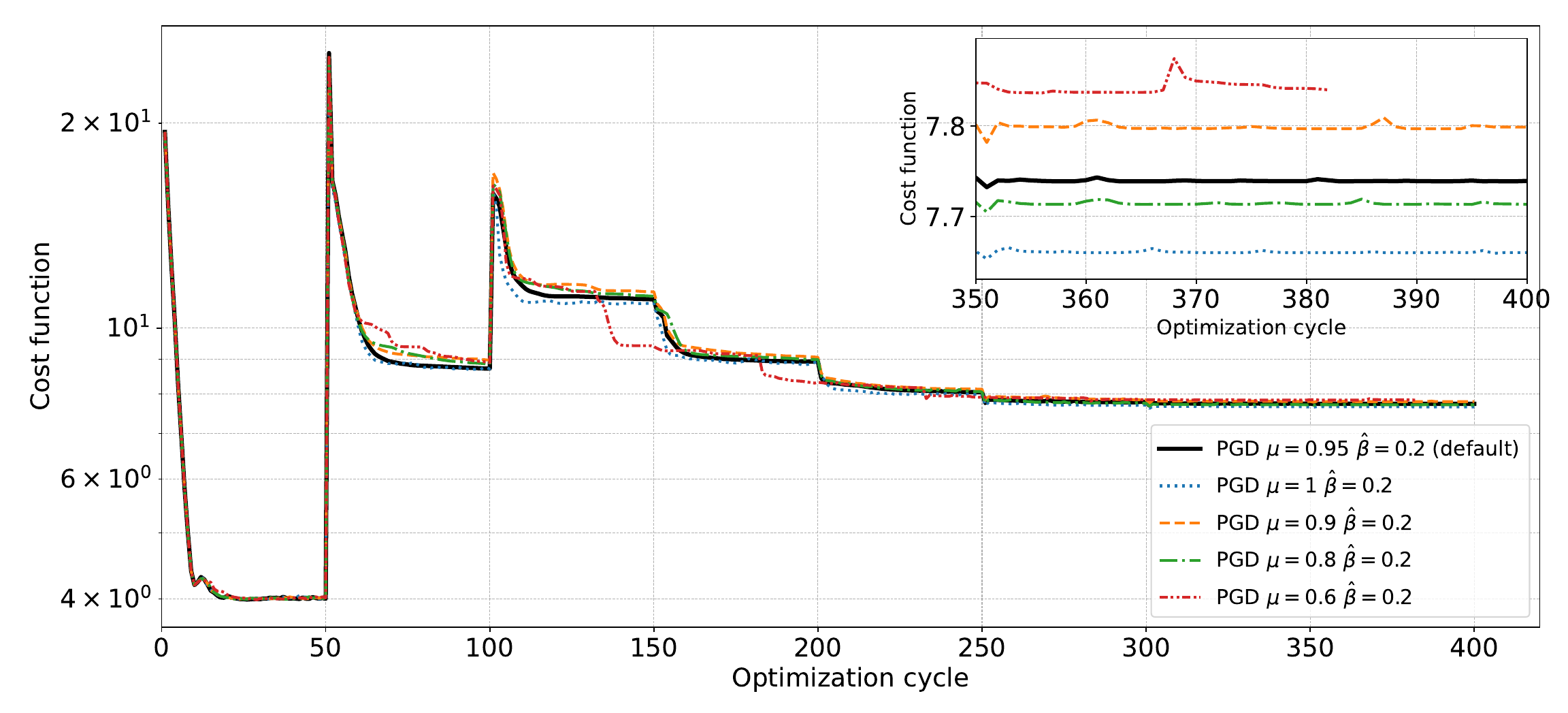}
	\caption{Comparison of the cost function evolution for different values of the broken constraint relaxation factor $\mu$.}
\label{fig::overhang_cost_BCR_effect}
\end{figure}

\begin{figure}[!htpb]
	\centering
	\includegraphics[width=1\textwidth]{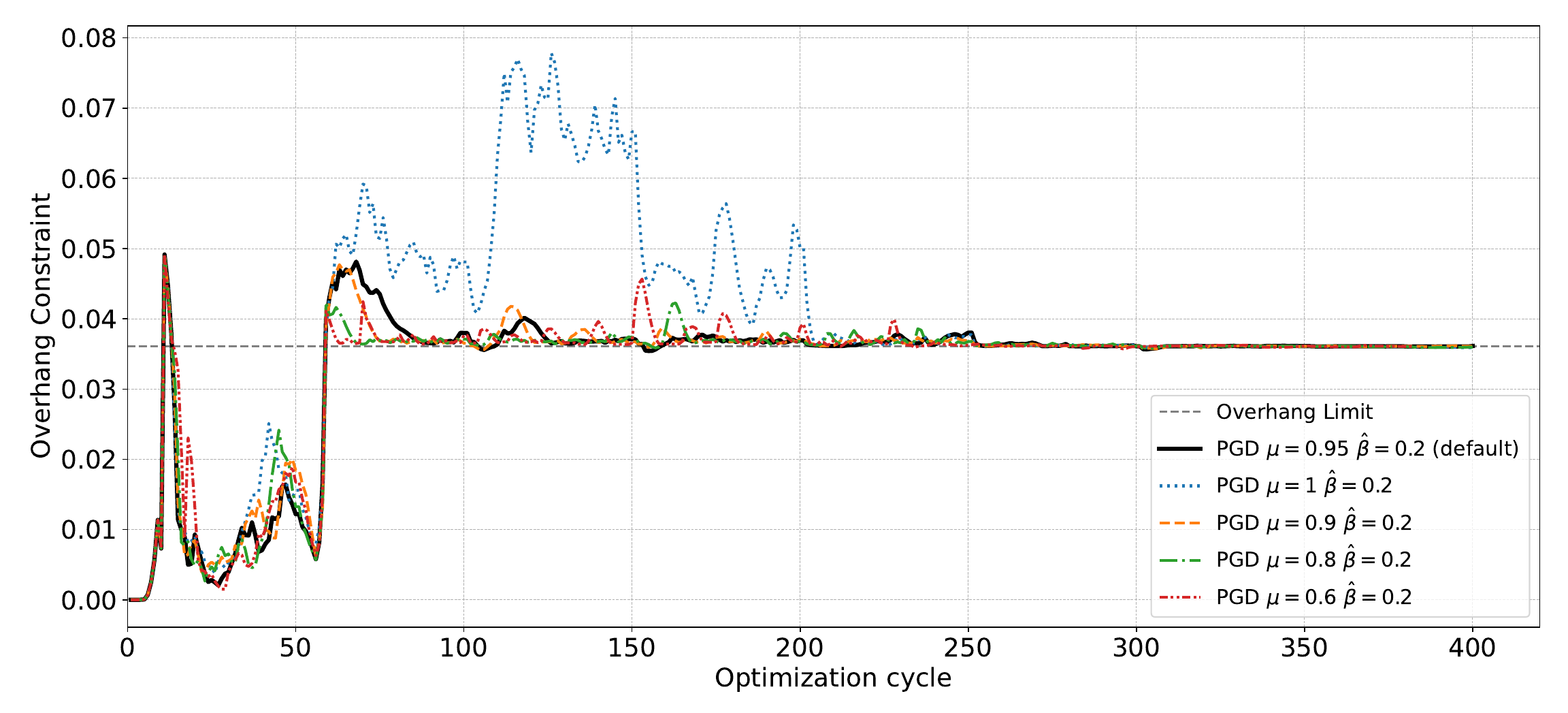}
	\caption{Comparison of the overhang constraint evolution for different values of the broken constraint relaxation factor $\mu$.}
\label{fig::overhang_overhang_BCR_effect}
\end{figure}

\subsubsection{Effect of the inertia parameter, $\hat{\beta}$}
The inertia term adds momentum to the optimization process and prevents the optimizer from getting stuck in local minima. Inertia can further improve the convergence speed in some cases \cite{nishioka2023inertial}, although it can also slow convergence for (mostly) convex problems. The effect of this parameter on the optimization process is presented in Figures \ref{fig::overhang_cost_inertia_effect} and \ref{fig::overhang_overhang_inertia_effect}.  These results show that increasing $\hat{\beta}$ decreases the final cost function. However, the increase can also lead to oscillations in the constraint values and larger constraint violations. In the first loop (cycles 1 to 50), for the two larger $\hat{\beta}$ values (0.3 and 0.4), the optimizer has trouble maintaining the overhang constraint. This, in turn, slows the convergence of the optimizer, as observed for $\hat{\beta}=0.4$ in Figure \ref{fig::overhang_cost_inertia_effect}. 
Conversely, the optimizer without inertia ($\hat{\beta}=0$) converges quickly at the beginning and seems more effective than optimizers without inertia. However, from the fourth loop onward, the cost function for this case starts exceeding that of cases with $\hat{\beta}>0$, and the differences become progressively larger, suggesting that the optimizer is stuck in a sub-optimal local minimum. The ideal value for this parameter depends on the cost function landscape and its smoothness. It should be chosen to balance the deterioration of the convergence when the problem is mostly convex and the prevention of converging to local minima as the hyperparameters are increased.

Finally, one can note that although the omission of the inertial factor ($\hat{\beta}=0$) can have a noticeable effect on the final cost function (5\,\% higher cost than with $\hat{\beta}=0.2$), the latter varied by at most 7\,\% when $\hat{\beta}>0$. Regarding results for the other two optimizer parameters, $\mu$ and $\epsilon_i$, the largest cost variation of approximately 10\,\% was observed when using the overly low $\epsilon_i$ value. Hence, we may conclude that the proposed PGD algorithm is only slightly sensitive to the optimizer parameter values. 
\begin{figure}[!htpb]
	\centering
	\includegraphics[width=1\textwidth]{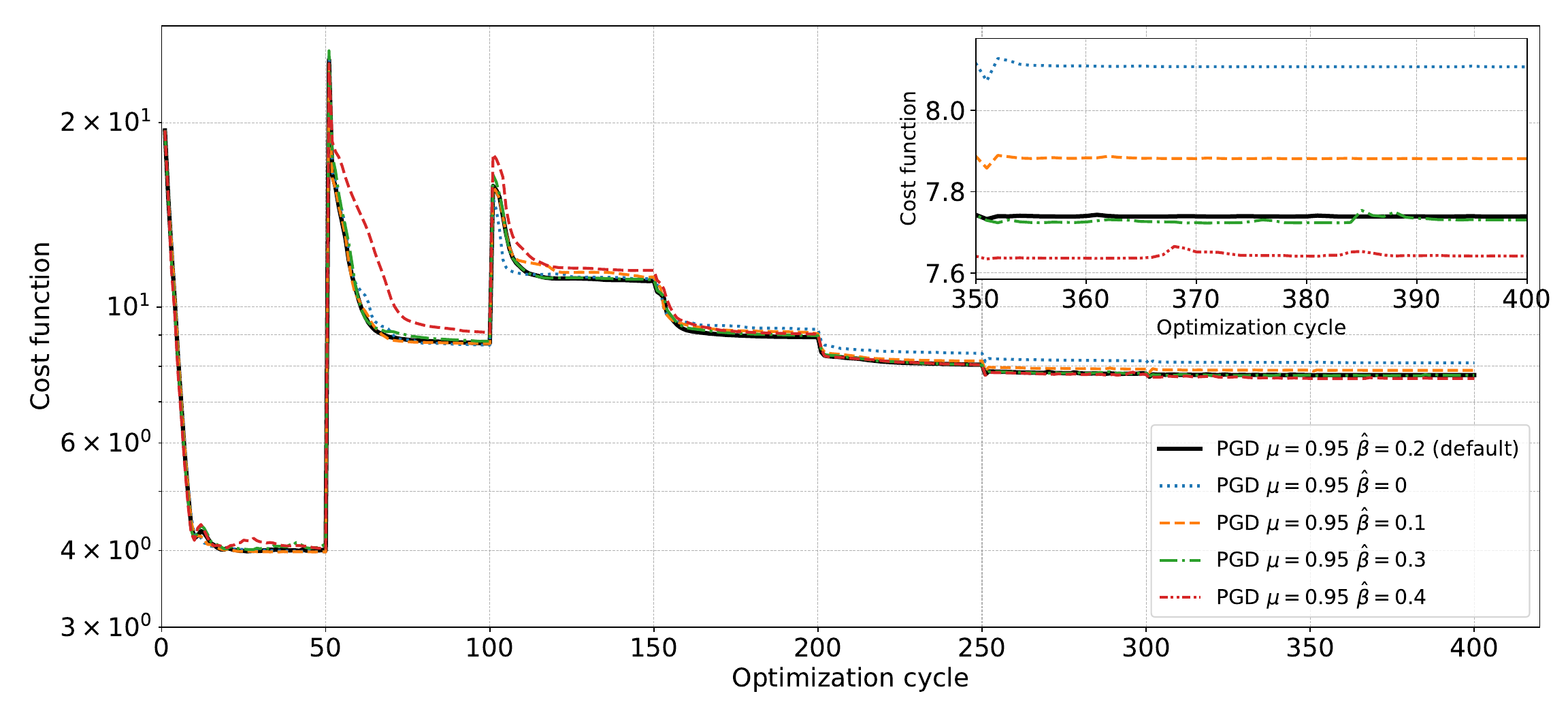}
	\caption{Comparison of the cost function evolution for different values of the inertia factor $\hat{\beta}$.}
\label{fig::overhang_cost_inertia_effect}
\end{figure}

\begin{figure}[!htpb]
	\centering
	\includegraphics[width=1\textwidth]{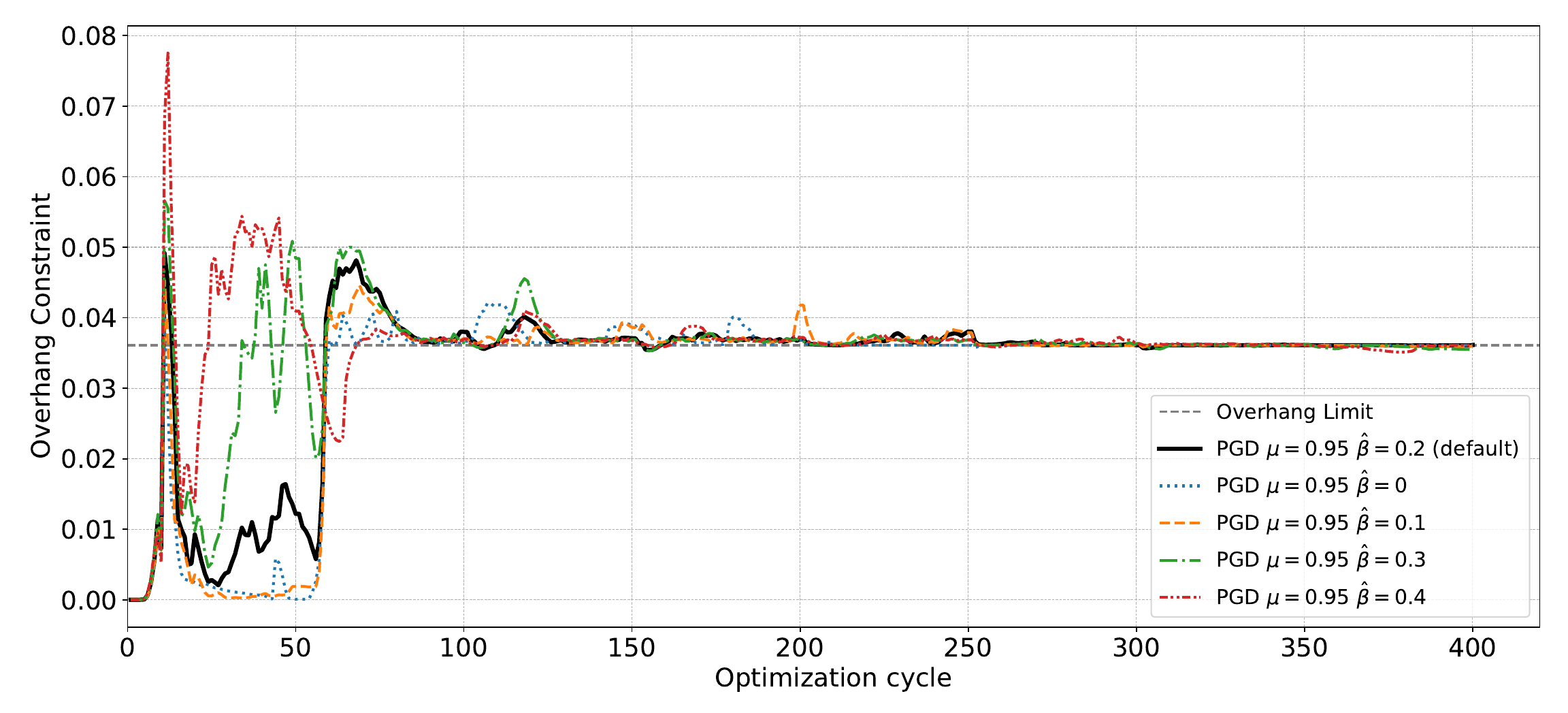}
	\caption{Comparison of the overhang constraint evolution for different values of the inertia factor $\hat{\beta}$.%
 }
\label{fig::overhang_overhang_inertia_effect}
\end{figure}
\FloatBarrier

\bmsection{Conclusion}
\label{sec::conclusion}
    

This study introduces two main modifications to the PGD algorithm: the incorporation of a bulk constraint manipulation combined with Schur complement to handle univariate constraints in the projection step and the decomposition of the update step into two distinct components relative to the constraint vector space. These innovations enable the algorithm to handle nonlinear constraints more effectively.
\begin{itemize} 
    \item \textbf{Active set with bulk constraint manipulation and efficient projection evaluation:} The modified PGD algorithm uses a modified active set algorithm that supports the bulk manipulation of constraints and ensures convergence with a fallback mechanism to single constraint manipulation. This algorithm also leverages the Schur complement to consider directly univariate constraints and accelerate the resolution of the projection step. This approach addresses issues associated with direct min--max clipping, ensuring an optimal projection. 
    \item \textbf{Splitting the update into two components:} the algorithm's robustness in the presence of nonlinear constraints is improved by decomposing the projected gradient into two components: one orthogonal to all constraints variations and the other within the vector space defined by the span of the constraints gradient. This decomposition allows for a dynamic scaling of the orthogonal component and considers the variation of the Lagrangian gradient and the state (broken or not) of the nonlinear constraints. The step size adjustment ensures an effective handling of the constraints, leading to more stable optimization and mitigating constraint violation. 
\end{itemize}

By integrating these improvements, the proposed PGD algorithm achieves a more robust and efficient optimization process capable of handling nonlinear constraints with increased accuracy and stability. The algorithm is applied to a topology optimization problem involving the design of a heat sink. The results presented in Section \ref{sec::application} demonstrate that the proposed algorithm performs comparably or, in certain cases, slightly better than the well-known and established MMA algorithm \cite{svanberg1987method}.
The MMA method relies on precisely tuning the asymptote's moving limit parameter to ensure fast and reliable convergence. In contrast, the proposed PGD algorithm requires minimal tuning to achieve effective results. Specifically, setting the inertia parameter $\hat{\beta}$ between 0 and 0.3 assists in escaping some local minima without significantly slowing convergence, improving the final cost function by up to 5\,\%. However, increasing this parameter too much causes the convergence to slow down. Also, two additional parameters are available for constraints management: a relaxation factor for broken constraints ($\mu$) and a tolerance for each constraint ($\epsilon_i$). These parameters moderately influence the optimization process; using a lower tolerance ($\epsilon_i$) will enforce a stricter compliance with the constraint limit, at the cost of a slower convergence and a potentially suboptimal final solution if insufficient iterations are allowed. As for $\mu$, its value has a minimal impact on the final cost function (at most 2\,\% variation for the considered case); however, setting $\mu$ too high can lead to large constraint violations, whereas a too low value can cause oscillation in nonlinear constraints. Still, we show that the optimizer converges across various parameter settings without significantly affecting the final cost function and constraint values (the largest final cost variation (10\,\%) was caused by the use of an overly small tolerance). Consequently, the proposed PGD algorithm does not require fine-tuning the parameters, although adjustments can occasionally enhance the final solution obtained.

While the developed PGD algorithm performed effectively in most cases, specific scenarios highlighted potential areas for improvement. For instance, applying the overhang constraint directly to the solid fraction indicator ($\Bar{\phi}$) instead of the filtered design variable ($\hat{\phi}$) made the optimization process much more difficult since the overhang constraint passes through the Heaviside projection, amplifying nonlinearities. Additionally, the increase in the sharpness parameter $\lambda$ between inner loops causes breakage of the overhang constraint without changing the design variable. As $\lambda$ increases, correcting the constraint following the change in $\lambda$ becomes more severe and requires more iterations. This highlights that limitations still exist for very nonlinear cases. A possible improvement could be to implement a relaxation for broken constraints that dynamically adjusts based on the error in the constraints instead of solely relying on the consecutive number of iterations where constraints are broken. This modification could ensure that the jump in the overhang constraint due to a change in $\lambda$ between inner loops is directly captured and corrected more rapidly.

The proposed PGD algorithm’s applicability goes beyond topology optimization. This type of first-order algorithm will excel in applications involving large design variable spaces, such as topology optimization. Among other potential applications, using the PGD algorithm to craft adversarial prompts in large language models has shown great potential \cite{geisler2024attacking}. These adversarial prompts usually use the embedding space of the token and the token space 
to craft adversarial prompts. This type of problem can have large dimensionality (easily reaching millions of decision variables), making the current PGD algorithm a promising avenue.

\bmsection*{Author contributions}

\textbf{Lucka Barbeau:} Conceptualization (lead); Data Curation (lead); Formal Analysis (lead); Methodology (lead); Software (lead); Validation (lead); Visualization (lead); Writing – Original Draft Preparation (lead); Writing – Review \& Editing (equal). 
\textbf{Marc-Étienne Lamarche-Gagnon:} Funding Acquisition (lead); Software (supporting); Formal Analysis (supporting); Visualization (supporting); Writing – Original Draft Preparation (equal); Project Administration (lead); Writing – Review \& Editing (equal). 
\textbf{Florin Ilinca:} Supervision (lead); Writing – Review \& Editing (supporting).


\bmsection*{Financial disclosure}
This work was partially supported by the National Research Council of Canada METALTec industrial research group, as well as by the Office of Energy Research and Development of Canada (grant NRC-23-128) and by the Centre Québecois de Recherche et de Développement de l’Aluminium (grant CNRC1185).

\bmsection*{Conflict of interest}

The authors declare no potential conflict of interests.

\bibliography{wileyNJD-AMA}



\appendix
\section{Active set frequency of single constraints manipulation}
\label{appendix::frequency_6_6c_fails}
This appendix presents results for the frequency of single constraint manipulation in the algorithm presented in Section \ref{sec::active_set}.
To test this, the algorithm solves a simple convex optimization problem in the form of:
\begin{align}
    \mathcal{C}(\phi)=\sum_i^k|B_i|(\phi_i-B_i)^4.
\end{align}
Here $B$ is vector of length $k$ and it element $B_i$ are uniformly distributed random number between -10 and 10.
This optimization problem has $m$ linear inequality constraints in the form of :
\begin{align}
    \mathcal{A}\bm{\phi}\leq \bm{a},
\end{align}
where $\mathcal{A}$ is an $m$ by $k$ matrix of random number between -1 and 1, while $\bm{a}$ is the vector of $m$ positive inequality constant randomly selected between 0 and 1. The problem also has bound constraints in the form of:
\begin{align}
   -10\leq \bm{\phi}\leq 10.
\end{align}.

Using this general form, multiple optimization problems are tested, and the frequency at which the active set algorithm requires the fallback mechanism (when step 6 or step 6c condition is not satisfied) is evaluated. To do so, 30000 different cases are generated and optimized for each given pair of a number of degrees of freedom ($k$) and a number of global inequality constraints ($m$).

\begin{table}[h!]
    \centering
    \begin{tabular}{|c|c|c|c|c|}
        \hline
        & \multicolumn{4}{c|}{\textbf{Number of Degrees of Freedom}} \\
        \hline
        \textbf{Number of constraints} & \textbf{5} & \textbf{10} & \textbf{20} & \textbf{40} \\
        \hline
        \textbf{5} & 491 732 & 746 735 & 997 143 & 1 230 837  \\
        \textbf{10} & - & 590 326 & 882 156 &1 147 100
  \\
        \textbf{20}&- & -& 689 387& 990 823  \\
        \textbf{40} & -& -& -& 764 242 \\
        \hline
    \end{tabular}
    \caption{Total number of PGD iterations for different numbers of degrees of freedom (DOF) and number of constraints.}
    \label{tab:dof_vs_constraints_1}
\end{table}

\begin{table}[h!]
    \centering
    \begin{tabular}{|c|c|c|c|c|}
        \hline
        & \multicolumn{4}{c|}{\textbf{Number of Degrees of Freedom}} \\
        \hline
        \textbf{Number of constraints} & \textbf{5} & \textbf{10} & \textbf{20} & \textbf{40} \\
        \hline
        \textbf{5} & 1 198 364 & 1 852 362 &2 563 174 & 3 306 136 \\
        \textbf{10} & - & 2 085 808 & 2 881 787 &3 675 257  \\
        \textbf{20}&- & -& 3 304 679& 4 067 164 \\
        \textbf{40}& -& -& -& 4 760 507 \\
        \hline
    \end{tabular}
    \caption{Total number of projections (step 5 or step 6b) for different numbers of degrees of freedom (DOF) and number of constraints.}
    \label{tab:dof_vs_constraints_2}
\end{table}

\begin{table}[h!]
    \centering
    \begin{tabular}{|c|c|c|c|c|}
        \hline
        & \multicolumn{4}{c|}{\textbf{Number of Degrees of Freedom}} \\
        \hline
        \textbf{Number of constraints} & \textbf{5} & \textbf{10} & \textbf{20} & \textbf{40}  \\
        \hline
        \textbf{5} & 292& 6 & 0 & 0  \\
        \textbf{10} & - & 495 & 0 & 0\\
        \textbf{20}&- & -&160 & 0 \\
        \textbf{40} &- & -& -&2 \\
        \hline
    \end{tabular}
    \caption{Total number of times the condition of step 6 was not satisfied for different numbers of degrees of freedom and number of constraints.}
    \label{tab:dof_vs_constraints_3}
\end{table}

\begin{table}[h!]
    \centering
    \begin{tabular}{|c|c|c|c|c|}
        \hline
        & \multicolumn{4}{c|}{\textbf{Number of Degrees of Freedom}} \\
        \hline
        \textbf{Number of constraints} & \textbf{5} & \textbf{10} & \textbf{20} & \textbf{40}  \\
        \hline
        \textbf{5} & 27 & 0 & 0&  0\\
        \textbf{10} & - & 7 & 0 &0 \\
        \textbf{20}&- & -&0 &0  \\
        \textbf{40} &-& -&- & 0 \\
        \hline
    \end{tabular}
    \caption{Total number of times the condition of step 6c was not satisfied for different numbers of degrees of freedom (DOF) and number of constraints.}
    \label{tab:dof_vs_constraints_4}
\end{table}

From the results presented in Tables \ref{tab:dof_vs_constraints_1} to \ref{tab:dof_vs_constraints_4}, it is possible to see that it is extremely rare to observe that condition 6 is not satisfied and even more rare to observe that condition 6c is not satisfied. It is also possible to observe that it mostly happens in very constrained cases (cases with the same number of global inequality constraints as degrees of freedom). Furthermore, these results show that for these cases, the number of active set iterations per gradient projection required varies between 2 and 5 on average, increasing with the number of constraints.



\end{document}